\newtheorem{theorem}{Theorem}[section]
\newtheorem{corollary}[theorem]{Corollary}
\newtheorem{proposition}[theorem]{Proposition}
\theoremstyle{definition}
\newtheorem{example}[theorem]{Example}
\theoremstyle{remark}
\newtheorem{remark}[theorem]{Remark}
\numberwithin{equation}{section}
\newcommand{\tp}{\overset{\!\!\!\!\!\circ}{\sT_+}}
\newcommand{\ind}{\mathrm{ind}\,}
\newcommand{\diag}{\mathrm{diag}\,}
\newcommand{\wind}{\mathrm{wind}\,}
 \newcommand{\vp}{\varphi}
\newcommand{\D}{\displaystyle}
\newcommand{\smb}{\mbox{\rm smb}\,}
\newcommand{\coker}{\mbox{\rm coker}\,}
\renewcommand{\Im}{\mathrm{Im}\,}
\newcommand{\RE}{\mbox{\rm Re }}
\newcommand{\im}{\mathrm{im}\,}
\newcommand{\nn}{\nonumber}
\newcommand{\re}{\textrm{Re}\,}
 \newcommand{\esssup}{\hbox{\rm ess}\sup_{\!\!\!\!\!\!\!\!\!\!\! t\in {\mathbb{T}}}}
\newcommand{\cA}{\mathcal{A}}
\newcommand{\cB}{\mathcal{B}}
\newcommand{\cC}{\mathcal{C}}
\newcommand{\cK}{\mathcal{K}}
\newcommand{\cL}{\mathcal{L}}
\newcommand{\cM}{\mathcal{M}}
\newcommand{\cR}{\mathcal{R}}
\newcommand{\sC}{{\mathbb C}}
\newcommand{\sR}{{\mathbb R}}
\newcommand{\sT}{{\mathbb T}}
\newcommand{\sZ}{{\mathbb Z}}
\begin{document}

  \begin{center}
{\large\sc SOME RESULTS ON THE INVERTIBILITY OF\\[1ex] TOEPLITZ PLUS
HANKEL OPERATORS}

\vspace{5mm}

\textbf{Victor D. Didenko and Bernd Silbermann}\footnote{This
research was carried out when the second author visited the
Universiti Brunei
 Darussalam (UBD). The support of UBD provided via Grant UBD/GSR/S\&T/19  is highly
 appreciated.}


\vspace{2mm}


 Universiti Brunei Darussalam,
Bandar Seri Begawan, BE1410  Brunei; diviol@gmail.com

Technische Universit{\"a}t Chemnitz, Fakult{\"a}t Mathematik, 09107
Chemnitz, Germany; silbermn@mathematik.tu-chemnitz.de

 \end{center}

\keywords{Toeplitz plus Hankel operator, Invertibility}
\subjclass[2010]{Primary 47B35;  Secondary 47B48}

\date{}



\vspace{5mm}

{\footnotesize\textbf{Abstract.}
  The paper deals with the invertibility of Toeplitz plus Hankel
 operators $T(a)+H(b)$ acting on classical Hardy spaces on the unit
 circle $\sT$. It is supposed that the generating functions $a$ and $b$
 satisfy the condition $a(t)a(1/t)=b(t)b(1/t)$, $t\in\sT$. Special
 attention is paid to the case of piecewise continuous generating
 functions. In some cases the dimensions of null spaces of the operator $T(a)+H(b)$
 and its adjoint are described.}

\vspace{5mm}


 \section{Introduction}

\renewcommand{\labelenumi}{(\roman{enumi})}
Fredholm properties of Toeplitz plus Hankel  operators $T(a)+H(b)$
with piecewise continuous generating functions $a$ and $b$ have been
studied for many years. These operators are considered in various
Banach and Hilbert spaces, and the results obtained show that the
structure of the algebras generated by such operators is much more
complicated than the structure of the algebras generated by
one-dimensional singular integral operators with piecewise
continuous coefficients defined on closed smooth curves. Moreover,
calculating the index of Toeplitz plus Hankel operators, one
encounters even more difficult problems, and the difficulties grow
if one attempts to study invertibility or one-sided invertibility of
such operators. It is worth noting that, in general, Fredholm
operators $T(a)+H(b)$ are not one-sided invertible. Nevertheless, a
few works where one-sided invertibility was discussed, have appeared
in literature recently. They are mainly concerned with two special
cases of Toeplitz plus Hankel operators -- viz. with the operators
having the form $M(a):=T(a)+H(a)$ or the form
$\widetilde{M}(a):=T(a)+H(\widetilde{a})$, where
$\widetilde{a}(t):=a(1/t)$, $t\in\sT$ and   $\sT :=\{t \in \sC:
|t|=1 \}$ is the counterclockwise oriented unit circle
\cite{BE2004,CS2010a,CS2010b}. Thus one-sided invertibility of
Fredholm operators $M(a)$ was established in \cite{BE2004}. Similar
problems are studied in \cite{CS2010a, CS2010b} but a different
approach is employed. Moreover, in order to describe the kernel and
cokernel dimensions of Fredholm Toeplitz plus Hankel operators
$T(a)+H(b)$ the method of asymmetric factorization has been proposed
in \cite{BE2004, E2004}. This method is similar to the Wiener-Hopf
factorization used in the theory of Toeplitz operators. Still, for
$a\neq b$  the papers \cite{BE2004, E2004} do not offer effectively
verifiable conditions of one-sided invertibility of the operators
under consideration.

The present paper deals with Hankel plus Toeplitz operators
$T(a)+H(b)$, the generating functions of which satisfy the condition
\begin{equation*}
a \widetilde{a}=b \widetilde{b}.
\end{equation*}
The focus here is mainly on piecewise continuous generating
functions but certain results remain true for general generating
functions. Moreover, the approach used in this paper is not limited
to Toeplitz plus Hankel operators considered below. The results
remains true for Toeplitz plus Hankel operators with generating
functions $a,b\in PC_p$ and acting on the spaces $l^p(\sZ)$,
$1<p<\infty$, and also for Wiener-Hopf plus Hankel operators acting
on $L^p(\sR_+)$. The corresponding operators acting on weighted
spaces with appropriately chosen weights can be studied, as well.
Nevertheless, for the sake of simplicity, here we only consider the
case of $H^p$ spaces.

\section{Spaces and operators \label{l1}}

Let $X$ be a Banach space. By $\cL(X)$ we denote the Banach algebra
of all linear continuous operators on $X$. An operator $A\in\cL(X)$
is called Fredholm if the range $\im A:=\{ Ax:x\in X\}$ of the
operator $A$ is a closed subset of $X$ and the null spaces $\ker
A:=\{x\in X: Ax=0\}$ and $\ker A^*:=\{h\in X^*: A^*h=0\}$ of the
operator $A$ and the adjoint operator $A^*$ are finite-dimensional.
For the sake of convenience, the null space of the adjoint operator
$A^*$ is called the cokernel of $A$ and it is denoted by $\coker A$.
Further, if an operator $A\in\cL(X)$ is Fredholm, then the number
 \begin{equation*}
\kappa:=\dim\ker A-\dim\coker A
\end{equation*}
is referred to as the index of the operator $A$. Note that by $\dim
Y$ we denote the dimension of the linear space $Y$. Let $\cK(X)$
denote the set of all compact operators from $\cL(X)$. Then the
Fredholmness of an operator $A\in\cL(X)$ is equivalent to the
invertibility of the coset $A+\cK(X)$ in the Calkin algebra
$\cL(X)/\cK(X)$.

Let us now introduce some operators and spaces we need. As usual,
let $L^\infty(\sT)$ stand for the $C^*$-algebra of all essentially
bounded Lebesgue measurable functions on $\sT$, and let
$L^p=L^p(\sT)$, $1\leq p \leq\infty$ denote the Banach space of all
Lebesgue measurable functions $f$ such that

 \begin{align*}
||f||_p:= &\left ( \int_\sT |f(t)|^p \, dt \right )^{1/p}, \quad
1\leq p <\infty, \\
 ||f||_\infty:  = & \,\, \esssup |f(t)|,
\end{align*}
is finite. Further, let $H^p$ and $\overline{H^p}$ refer to the
Hardy spaces of all functions $f\in L^p$  the Fourier coefficients
of which
 \begin{equation*}
f_n=\frac{1}{2\pi}\int_0^{2\pi}f(e^{i\theta}) e^{-in\theta}\,
d\theta
 \end{equation*}
vanish for all $n<0$ and $n>0$, respectively. It is a classical
result that for $p\in (1,\infty)$ the Riesz projection $P$, defined
by
 \begin{equation*}
P: \sum_{n=-\infty}^{\infty} f_n e^{in\theta} \mapsto
\sum_{n=0}^{\infty} f_n e^{in\theta},
 \end{equation*}
is bounded on the space $L^p$ and its range is the whole space
$H^p$. The operator $Q:=I-P$,
 \begin{equation*}
Q:\sum_{n=-\infty}^{\infty} f_n e^{in\theta} \mapsto
\sum_{n=-\infty}^{-1} f_n e^{in\theta}
 \end{equation*}
is also a projection and its range is a subspace of the codimension
one in $\overline{H^p}$.

We also consider the flip operator $J:L^p\mapsto L^p$,
 \begin{equation*}
(Jf)(t):=\overline{t}f(\overline{t}), \quad t\in \sT,
 \end{equation*}
where the bar denotes the complex conjugation. Note that the
operator $J$ changes the orientation, satisfies the relations
 \begin{equation*}
J^2=I, \quad  JPJ=Q, \quad JQJ=P,
 \end{equation*}
and for any $a\in L^\infty$,
 \begin{equation*}
JaJ=\widetilde{a}I.
 \end{equation*}

Now let us introduce Toeplitz and Hankel operators $T(a)$ and
$H(a)$.

 The operator $T(a):H^p\mapsto H^p$ is defined
for all $a\in L^\infty$ and $1<p<\infty$ by
 \begin{equation*}
T(a): f\mapsto Paf.
 \end{equation*}
This operator is obviously bounded and
 \begin{equation*}
||T(a)||\leq c_p ||a||_\infty,
 \end{equation*}
where $c_p$ is the norm of the Riesz projection on $L^p$. This
operator is called Toeplitz operator generated by the function $a$.
Toeplitz operators with matrix-valued generating functions acting on
$H^p\times H^p$ are defined similarly.

For $a\in L^\infty$,  the Hankel operator $H(a):H^p\mapsto H^p$ is
defined by
 \begin{equation*}
H(a): f\mapsto PaQJf.
 \end{equation*}
It is clear that this operator is also bounded but, in contrast to
Toeplitz operators, the corresponding generating function $a$ is not
uniquely defined by the operator itself. Note that if $a$ belongs to
the space of all continuous functions $C=C(\sT)$, then the Hankel
operator $H(a)$ is compact on the space $H^p$. Moreover, Hankel
operators are never Fredholm, whereas if a Toeplitz operator
$T(a):H^p\to H^p$ is Fredholm, then $T(a)$ is one-sided invertible.
On the other hand, Fredholm Toeplitz operators with matrix-valued
generating functions are not necessarily one-sided invertible, and
this fact causes numerous difficulties in their studies.

A function $a\in L^\infty$ is called piecewise continuous if for
every $t\in \sT$ the one-sided limits $a(t+0)$ and $a(t-0)$ exist.
The set of all piecewise continuous functions is denoted by
$PC(\sT)$ or simply by $PC$. It is well-known that $PC$ is a closed
subalgebra of $L^\infty$. Any function from $PC$ has at most
countable set of jumps. Moreover, for each $\delta>0$ the set $S:=\{
t\in\sT:|a(t+0)-a(t-0)|>\delta \}$ is finite.

If $\cB$ is a unital subalgebra of $L^\infty$ containing the algebra
$C$, then we denote by $T(\cB)$  and $TH(\cB)$ the smallest closed
subalgebra of $\cL(H^p)$ containing all Toeplitz operators $T(a)$,
respectively, Toeplitz plus Hankel operators  $T(a)+H(b)$ with
generating functions from $\cB$. In the case where $\cB=C$ the
algebras $TH(C)$ and $T(C)$ coincide since the algebra $T(C)$
already contains all compact operators. Additionally, an operator
$T(a)+H(b)$ is Fredholm if and only if $a(t)\neq 0$ for all $t\in
\sT$ and
\begin{equation*}
\ind (T(a)+H(b))=\ind T(a)=-\wind a,
\end{equation*}
where $\wind a$ is the winding number of the contour $\Gamma:=\{
a(t):t\in \sT\}$ with respect to the origin. The algebras $T(PC)$
and $TH(PC)$ are much more complicated and will be discussed in
Section \ref{s5}.

The theory of Toeplitz plus Hankel operators is heavily based upon
the relations
 \begin{align}
   T(ab)=T(a)T(b)+H(a)H(\widetilde{b}), \label{eq1}\\
    H(ab)=T(a)H(b)+H(a)T(\widetilde{b}), \label{eq2}
\end{align}
which show that Toeplitz and Hankel operators are closely related
even for general $a,b\in L^\infty$. A consequence of \eqref{eq1} is
that for $a=c_- c c_+$ with $c_-\in \overline{H^\infty}, c\in
L^\infty, c_+\in H^\infty$, the representation $T(a)=T(c_-)T(c)
T(c_+)$ holds. This representation is often  used in the forthcoming
sections. More facts about Toeplitz and Hankel operators can be
found in \cite{BS, GK1992a, GK1992b}.

 \section{A method to study invertibility in $TH(L^\infty)$}

To study the invertibility of the operators $T(a)+H(b)$ acting on
the space $H^p$, $1<p <\infty$ we shall make use of a well-known
classical formula. Let $X$ and $Y$  be arbitrary operators acting on
the space $L^p$ and let $J$ be the above defined flip operator. Then
the relation
 \begin{equation}\label{eq3}
\left(%
\begin{array}{cc}
  X & Y \\
  JYJ & JXJ \\
\end{array}
 \right)%
=\frac{1}{2} \left(%
\begin{array}{cc}
  I & I \\
  J& -J \\
\end{array}%
\right)
 \left(%
\begin{array}{cc}
  X+YJ & 0 \\
 0 & X-YJ \\
\end{array}%
\right)
 \left(%
\begin{array}{cc}
  I & J \\
  I & -J \\
\end{array}
\right)
\end{equation}
holds, and the factors
 \begin{equation*}
\frac{1}{2}\left(%
\begin{array}{cc}
  I & I \\
  J& -J \\
\end{array}%
\right )  ,
 \quad
\left(%
\begin{array}{cc}
  I & J \\
  I & -J \\
\end{array}
\right).
\end{equation*}
are the inverses to each other. This identity is widely used to
study singular integral operators with conjugation and also
equations with  Carleman shifts preserving orientation.

Note that the operators $T(a)\pm H(b):H^p\mapsto H^p$ are invertible
or Fredholm if and only if so are the operators $T(a)\pm H(b)+Q=
(PaP+Q)\pm PbQ J:L^p\mapsto L^p$. Thus we can study the modified
operator $T(a)\pm H(b)+Q$ instead of the initial one, and if we set
$X=PaP+Q$, $Y=PbQ$, then relation \eqref{eq3} leads to the
representation
  \begin{align}
\cA:= & \left(%
\begin{array}{cc}
  PaP+Q & PbQ \\
 Q\widetilde{b}P & Q\widetilde{a}Q + P \\
\end{array}%
\right)=
\left(%
\begin{array}{cc}
  PaP+Q & PbQ \\
 JPbQJ & J(PaP+Q)J \\
\end{array}%
\right)  \label{eq4} \\[2ex]
 = &\frac{1}{2} \left(%
\begin{array}{cc}
  I & I \\
  J& -J \\
\end{array}%
\right)
\left(%
\begin{array}{c@{\hspace{-1mm}}c}
  PaP+Q+PbQJ & 0 \\
  0 & PaP+Q-PbQJ \\
\end{array}%
\right)
 \left(%
\begin{array}{cc}
  I & J \\
 I & -J \\
\end{array}
\right) . \nn
\end{align}
Thus the operator $\diag (T(a)+ H(b),T(a)- H(b))$ acting on the
space $H^p\times H^p$ has the same Fredholm properties as the
operator $\cA$ acting on $L^p\times L^p,\, 1<p<\infty$. In
particular, the kernels of these two operators have the same
dimension and so are the cokernels. Let us also recall that the
shift $J$ changes the orientation and this fact causes essential
complications, for example, it can happen that one of the operators
$T(a)\pm H(b)$ is Fredholm but the other one is not. Examples where
such situation occurs are presented in Section \ref{s5}, cf.
Examples \ref{ex1}--\ref{ex2}.

From now on we always assume that the generating function $a$ is
invertible. This does not lead to loss of generality since
Fredholmness of the operator $T(a)+H(b)$ implies the invertibility
of $a$ in $L^\infty$, \cite {BE2004,RS1987}. Now we can formulate
the first result.

 \begin{theorem}\label{t1}
The operators $\diag (T(a)+H(b),T(a)-H(b)): H^p\times H^p \mapsto
H^p\times H^p$ and $T(U(a,b)): H^p\times H^p \mapsto H^p\times H^p$,
 \begin{equation}\label{eq5}
 U(a,b)=\left(%
\begin{array}{cc}
  a-b \widetilde{b}\widetilde{a}^{-1} & - b\widetilde{a}^{-1}\\
   \widetilde{b} \widetilde{a}^{-1} &  \widetilde{a}^{-1} \\
   \end{array}%
\right)
\end{equation}
are simultaneously Fredholm or not. If they are Fredholm, then
\begin{equation}\label{eq6}
\ind\diag (T(a)+H(b),T(a)-H(b))=\ind T(U(a,b)).
\end{equation}
Moreover the kernel and cokernel dimensions of these operators
coincide.
 \end{theorem}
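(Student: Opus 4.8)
The plan is to replace $\diag(T(a)+H(b),T(a)-H(b))$ by the operator $\cA$ of \eqref{eq4} and then to turn $\cA$ into a genuine block Toeplitz operator. First I would observe that in \eqref{eq3} the two outer factors are mutually inverse, hence invertible on $L^p\times L^p$; composing with invertible operators changes neither the dimension of the kernel nor that of the cokernel, and preserves the index. Thus by \eqref{eq4} it suffices to analyse
\[
\cA=\bP\,(gI)\,\bP+\bQ,\qquad g=\begin{pmatrix} a & b\\ \widetilde b & \widetilde a\end{pmatrix},\quad \bP=\diag(P,Q),\quad \bQ=I-\bP .
\]
Writing each copy of $L^p$ as $\im P\oplus\im Q$, a direct inspection shows that the $\im Q$-part of the first copy and the $\im P$-part of the second copy are acted upon by the identity and split off as direct summands. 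Hence, up to these two identity summands, $\cA$ equals the reduced operator
\[
\cA'=\begin{pmatrix} T(a) & PbQ\\ Q\widetilde b P & Q\widetilde a Q\end{pmatrix}
\]
on $\im P\oplus\im Q$, and this passage again preserves kernel and cokernel dimensions and the index exactly.

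It then remains to identify $\cA'$ with $T(U(a,b))$. Here I would be guided by the scalar Schur factorization
\[
g=\begin{pmatrix}1 & b\widetilde a^{-1}\\ 0 & 1\end{pmatrix}\begin{pmatrix} a-b\widetilde b\widetilde a^{-1} & 0\\ 0 & \widetilde a\end{pmatrix}\begin{pmatrix}1 & 0\\ \widetilde a^{-1}\widetilde b & 1\end{pmatrix},
\]
whose Schur complement $a-b\widetilde b\widetilde a^{-1}$ is precisely the $(1,1)$-entry of $U(a,b)$ in \eqref{eq5}. Since $a$, and therefore $\widetilde a$, is invertible in $L^\infty$, the multiplication operators by $b\widetilde a^{-1}$, $\widetilde b\widetilde a^{-1}$ and $\widetilde a^{-1}$ are invertible, and from them I would build explicit invertible block-triangular operators on $\im P\oplus\im Q$ whose composition with $\cA'$ produces the block Toeplitz operator with symbol $U(a,b)$. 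The mechanism that turns the lower symbol $\widetilde a$ into the entry $\widetilde a^{-1}$ of $U(a,b)$ is the presence of $Q$ on the second coordinate; this is consistent with $\det U(a,b)=a\widetilde a^{-1}$, so that $\ind T(U(a,b))=-\wind(a\widetilde a^{-1})=-2\wind a=2\,\ind T(a)$, matching $\ind\diag(T(a)+H(b),T(a)-H(b))$ as demanded by \eqref{eq6}.

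The main obstacle is exactly this orientation reversal. Because $J$ reverses orientation, the two diagonal blocks of $\cA$ carry opposite analyticity -- one is governed by $P$, the other by $Q$ -- so the triangular factors of $g$ do not commute past the projection $\bP$ in the naive way, and the usual ``pass the outer factors through $T$'' argument for block Toeplitz operators is not directly available. The delicate point is to arrange the intertwining operators so that they are genuinely invertible, not merely Fredholm, since only then are the kernel and cokernel dimensions preserved and not just the index. The bookkeeping is carried out with the relations \eqref{eq1} and \eqref{eq2}, which rewrite every product of the form $PcP\,dP$ occurring in the reduction as a Toeplitz operator together with a controllable Hankel term. Verifying that these Hankel contributions assemble exactly into the off-diagonal entries $-b\widetilde a^{-1}$ and $\widetilde b\widetilde a^{-1}$ of $U(a,b)$, rather than into an uncontrolled remainder, is where the real work lies.
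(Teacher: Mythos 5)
Your reduction via \eqref{eq3}--\eqref{eq4} to the operator $\cA$, and the observation that two of the four corners of $\cA$ act as identities so that only the compression $\cA'$ matters, are both correct and coincide with the paper's starting point. The genuine gap is in the second half: you never construct the invertible operators that are supposed to carry $\cA'$ onto $T(U(a,b))$. You propose to obtain them from the Schur factorization of $g$, but, as you yourself note, the triangular factors of $g$ do not commute with $\bP=\diag(P,Q)$, so that factorization cannot simply be compressed by $\bP$; your closing sentence concedes that verifying how the Hankel contributions assemble ``is where the real work lies''. That verification is precisely the content of the theorem, so what you have is a plan, not a proof. There are also two incidental slips: multiplication by $b\widetilde{a}^{-1}$ and $\widetilde{b}\widetilde{a}^{-1}$ need not be invertible, since only $a$, not $b$, is assumed invertible in $L^\infty$ (only the unipotent triangular matrices are invertible regardless); and the consistency check $\ind T(U(a,b))=-\wind\det U(a,b)$ is not available for general $L^\infty$ (or even $PC$) symbols.

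The missing idea --- and it is exactly how the paper closes this gap --- is the elementary identity \eqref{eq7}: for an idempotent $p$ in a unital algebra, $gp+(e-p)=(pgp+(e-p))(e+(e-p)gp)$, where $e+(e-p)gp$ is invertible with inverse $e-(e-p)gp$. This is applied twice, and it is what makes ``passing factors through the projection'' unnecessary. First, with $p=\diag(P,Q)$ it replaces the two-sided compression $\cA=pgp+(I-p)$ by the one-sided multiplication $\widehat{\cA}=gp+(I-p)$ at the cost of an explicit invertible right factor. Second, one writes
\begin{equation*}
\widehat{\cA}=\left(\begin{array}{cc} a & 0\\ \widetilde{b} & 1 \end{array}\right)\diag(P,P)+R\,\diag(Q,Q),
\qquad
R:=\left(\begin{array}{cc} 1 & b\\ 0 & \widetilde{a} \end{array}\right),
\end{equation*}
where $R$ is invertible because $a$ (hence $\widetilde{a}$) is, and checks that $R^{-1}\widehat{\cA}=U(a,b)\diag(P,P)+\diag(Q,Q)$; applying \eqref{eq7} once more, now with the idempotent $\diag(P,P)$, turns this into $T(U(a,b))$ plus an identity summand. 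Every step is left or right multiplication by an explicitly invertible operator, so Fredholmness, the index, and the kernel and cokernel dimensions are all preserved --- which is the full strength of conclusion the theorem claims and which your sketch, as it stands, cannot deliver.
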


 \begin{proof}
Let us first represent the operator $\cA$ in a different form. To
this end we recall the following fact. If $p$ is an idempotent in a
unital Banach algebra $\cC$ with identity $e$, then the elements
$ap+(e-p)$ and $pap+(e-p)$ have identical invertibility properties.
Consequently, if $\cC=\cL(Z)$, where $Z$ is a Banach space, then
$ap+(e-p)$ and $pap+(e-p)$ have identical Fredholm properties. This
immediately follows from the relation
 \begin{equation}\label{eq7}
ap+(e-p)=(pap+(e-p))(e+(e-p)ap),
\end{equation}
and from the invertibility of the element $e+(e-p)ap$ with
$((e+(e-p)ap)^{-1}=e-(e-p)ap$. Consider now the idempotent $p\in
\cL(L^p\times L^p)$ defined by $p:=\diag (P,Q)$. Then the operator
$\cA$ of \eqref{eq4} can be represented in the form \cite{ES1998},
 \begin{equation*}
\cA=p\left(%
\begin{array}{cc}
  a & b \\
  \widetilde{b}  & \widetilde{a} \\
   \end{array}%
\right)
  p +(\diag(I,I)-p).
 \end{equation*}
The relation \eqref{eq7} implies that the operators $\cA$ and
$\widehat{\cA}$,
 \begin{equation*}
\widehat{\cA}=\left(%
\begin{array}{cc}
  a & b \\
  \widetilde{b}  & \widetilde{a} \\
   \end{array}%
\right ) \diag(P,Q)+\diag (Q,P)
 \end{equation*}
have identical Fredholm properties. On the other hand, the operator
$\widehat{\cA}$ can also be written as
\begin{equation*}
\widehat{\cA}=\left(%
\begin{array}{cc}
  a & 0 \\
  \widetilde{b}  & 1 \\
   \end{array}%
\right ) \diag(P,P)+\left(%
\begin{array}{cc}
  1 & b \\
  0  & \widetilde{a} \\
   \end{array}%
\right )\diag (Q,Q).
\end{equation*}
Now we note that the invertibility of the function $a$ implies the
invertibility of the matrix
 \begin{equation*}
R:=\left(%
\begin{array}{cc}
  1 & b \\
  0  & \widetilde{a} \\
   \end{array}%
\right ).
 \end{equation*}
Therefore, applying the relation \eqref{eq7} again but to the
operator $R^{-1}\widehat{\cA}$, we  obtain the claim.
 \end{proof}

  \begin{remark}\label{r1}
If the operator $T(U(a,b))$ is Fredholm, then its kernel and
cokernel dimensions can be expressed via the partial indices of the
matrix $U(a,b)$. However, the computation of the partial indices is
a very complicated task, and nowadays there are only a few classes
of matrices the partial indices of  which can be determined
\cite{LS1987}. In the present paper we derive some results on
one-sided invertibility of Toeplitz plus Hankel operators but our
method is not directly linked to the partial indices approach.
  \end{remark}

Let us make an important assumption, viz. let us assume that the
generating functions of the corresponding Toeplitz plus Hankel
operator $T(a)+H(b)$ satisfy the condition
\begin{equation}\label{eq8}
a (t) \widetilde{a}(t)=b (t)\widetilde{b}(t)=1, \quad t\in\sT.
\end{equation}
Equation \eqref{eq8} is called the matching condition, and the
corresponding pair of functions $(a,b)$ is called  matching pair.
Note that the set of all matching pairs $(a,b)\in L^\infty\times
L^\infty$ endowed with the operation $(a_1,b_1)(a_2,b_2):=(a_1
a_2,b_1 b_2)$ is a group. For any matching pair $(a,b)$, consider
the functions $c:=a b^{-1}$ and $d:=b\widetilde{a}^{-1}$ and call
them the matching functions for $(a,b)$ or, simply, the matching
functions. Moreover, the duo $(c,d)$ is referred to as  the
subordinated matching pair for the pair $(a,b)$.  Obviously, any
matching function possesses the property
\begin{equation}\label{eq9}
 c (t)\widetilde{c}(t)=d (t)\widetilde{d}(t)=1, \quad t\in \sT.
\end{equation}
On the other hand, if a function $c$ satisfies the equation
\eqref{eq9}, then it is the matching function for any pair $(ac,a)$
and for any pair $(a,a\widetilde{c})$, $a\in L^\infty$. Therefore,
in the following any function satisfying \eqref{eq9} is called
matching function. Moreover, if $c_1, c_2$ are matching functions,
then the product $c_1 c_2$ is the one as well.

An example of a matching function is the function $c=c(t)=t^n, t\in
\sT, n\in \sZ$. The set of matching functions  is quite large. For
instance, let an element $g_0\in L^\infty$ be continuous at the
points $t=\pm 1$, invertible in $L^\infty$ and such that
$g_0(\pm1)\in \{-1,1\}$. Let $\sT_+:=\{t\in \sT:\Im t\geq 0 \}$ be
the upper half-circle. Set
 \begin{equation*}
g(t)=\left \{
 \begin{array}{ll}
g_0(t) & \text{ if } t\in \sT_+ \\
g^{-1}_0(\overline{t})& \text{ if } t\in \sT\setminus \sT_+
 \end{array}
 \right. .
 \end{equation*}
Then $g\widetilde{g}=1$ for all $t\in \sT$, so $g$ is a matching
function.

In passing note that if $(a,b)$ is a matching pair, then the matrix
$U(a,b)$ takes the form
\begin{equation*}
U(a,b)=
\left(%
\begin{array}{cc}
 0  & -b \widetilde{a}^{-1} \\
 \widetilde{b}^{-1} \widetilde{a}^{-1}   & \widetilde{a}^{-1} \\
   \end{array}%
\right) =\left(%
\begin{array}{cc}
0   & - d  \\
 c   & \widetilde{a}^{-1} \\
   \end{array}%
\right).
\end{equation*}
Thus, in this case $U(a,b)$ is a triangular matrix and the
corresponding Toeplitz plus Hankel operator can be studied in more
detail.


Now we need a few more facts concerning the connections between the
Fredholmness of certain matrix operators and their entries.  Let
$\cR$ be an associative ring with the unit $e$. Denote by
$\cR^{2\times2}$ the set of all $2\times 2$ matrices with entries
from $\cR$. This set, equipped with usual matrix operations, forms a
non-commutative ring with the unit $E=\diag (e,e)$.

 \begin{proposition}\label{p3}
Let $\alpha,\beta,\gamma \in \cR$. Then
 \begin{enumerate}
\item If the matrix
 \begin{equation*}
\Upsilon :=
\left(%
\begin{array}{cc}
  0 & -\beta \\
   \alpha & \gamma  \\
   \end{array}%
\right)
 \end{equation*}
is invertible, then the elements $\alpha$ and $\beta$ are,
respectively, invertible from the left and from the right. If, in
addition, one of these two elements is invertible, then so is the
other.

 \item If both elements $\alpha$ and $\beta$ are invertible, then
 the matrix $\Upsilon$ is invertible too.
 \end{enumerate}
 \end{proposition}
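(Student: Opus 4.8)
The plan is to argue directly with explicit matrix computations inside the ring $\cR^{2\times2}$, exploiting the block structure of $\Upsilon$. For part (1), suppose $\Upsilon$ is invertible with inverse
\begin{equation*}
\Upsilon^{-1}=\left(\begin{array}{cc} x & y \\ z & w \end{array}\right),\quad x,y,z,w\in\cR.
\end{equation*}
Writing out $\Upsilon\,\Upsilon^{-1}=E$ and $\Upsilon^{-1}\,\Upsilon=E$ gives a system of relations among $\alpha,\beta,\gamma$ and $x,y,z,w$. The top row of $\Upsilon$ is $(0,-\beta)$, so the product $\Upsilon\,\Upsilon^{-1}=E$ forces $-\beta z=e$ and $-\beta w=0$ from the first row, while the product $\Upsilon^{-1}\,\Upsilon=E$ forces (from the column through which $\alpha$ enters) a relation exhibiting a left inverse of $\alpha$. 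First I would read off from these equations that $\beta$ is right invertible (some element times $\beta$ on one side, or $\beta$ times some element, equals $e$) and that $\alpha$ is left invertible. The key is simply to collect the four scalar-ring equations coming from each of the two products and identify which ones are of the form ``(element)$\cdot\beta=e$'' or ``$\alpha\cdot$(element)$=e$''.

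For the second half of part (1) — that if one of $\alpha,\beta$ happens to be (two-sided) invertible then so is the other — I would use the relations obtained above together with the already-established one-sided invertibilities. If, say, $\alpha$ is invertible, then from the relation tying $\alpha$, $\beta$ and the entries of $\Upsilon^{-1}$ one can solve for a two-sided inverse of $\beta$: a right inverse of $\beta$ is already in hand, and invertibility of $\alpha$ upgrades the complementary relation to produce a left inverse of $\beta$ as well, whence $\beta$ is invertible. The symmetric argument handles the case where $\beta$ is assumed invertible. The mechanism here is the standard fact that an element possessing both a left and a right inverse is invertible, with the two one-sided inverses necessarily equal.

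For part (2), assuming both $\alpha$ and $\beta$ invertible, I would simply exhibit the inverse of $\Upsilon$ explicitly. Since $\Upsilon$ is ``lower-triangular after a swap,'' a natural candidate is
\begin{equation*}
\Upsilon^{-1}=\left(\begin{array}{cc} \alpha^{-1}\gamma\beta^{-1} & \alpha^{-1} \\ -\beta^{-1} & 0 \end{array}\right),
\end{equation*}
and one verifies $\Upsilon\,\Upsilon^{-1}=\Upsilon^{-1}\,\Upsilon=E$ by direct multiplication, using only associativity and the defining properties of $\alpha^{-1},\beta^{-1}$. This is a routine check and closes the proof.

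I expect the main obstacle to be the first half of part (1): extracting genuinely \emph{one-sided} (rather than two-sided) invertibility from the matrix relations, and being careful that noncommutativity of $\cR$ does not let one silently assume inverses on the wrong side. In particular, one must resist concluding full invertibility of $\alpha$ or $\beta$ prematurely — the hypothesis gives only that $\Upsilon$ is invertible, and the correct bookkeeping of which products land on the left versus the right of $\alpha$ and $\beta$ is where the argument must be done with care.
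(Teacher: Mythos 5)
Your proposal is correct, but it takes a genuinely different route from the paper. Your computations all go through: writing $\Upsilon^{-1}=\left(\begin{array}{cc} x & y\\ z & w\end{array}\right)$, the identity $\Upsilon\Upsilon^{-1}=E$ yields $\beta(-z)=e$ and $\beta w=0$, while $\Upsilon^{-1}\Upsilon=E$ yields $y\alpha=e$ and $w\alpha=0$, so $-z$ is a right inverse of $\beta$ and $y$ is a left inverse of $\alpha$; for the upgrade step, invertibility of $\alpha$ (resp.\ of $\beta$) forces $w=0$, whereupon the remaining identity $-z\beta+w\gamma=e$ (resp.\ $\alpha y+\gamma w=e$) collapses to $(-z)\beta=e$ (resp.\ $\alpha y=e$), producing the missing one-sided inverse; and your candidate inverse in part (2) checks out by direct multiplication. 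The paper instead derives everything from a single factorization,
\begin{equation*}
\Upsilon=\left(\begin{array}{cc}\beta & 0\\ 0& e\end{array}\right)\left(\begin{array}{cc}0 & -e\\ e& \gamma\end{array}\right)\left(\begin{array}{cc}\alpha & 0\\ 0& e\end{array}\right),
\end{equation*}
whose middle factor is invertible with inverse $\left(\begin{array}{cc}\gamma & e\\ -e& 0\end{array}\right)$: invertibility of $\Upsilon$ then makes $\diag(\beta,e)$ right invertible and $\diag(\alpha,e)$ left invertible; if one of $\alpha,\beta$ is invertible, the diagonal factor containing the other equals a product of three invertible matrices, hence is invertible; and part (2) is immediate. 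Multiplying out the paper's factorization when $\alpha,\beta$ are invertible gives exactly your formula $\left(\begin{array}{cc}\alpha^{-1}\gamma\beta^{-1} & \alpha^{-1}\\ -\beta^{-1}& 0\end{array}\right)$, so the two arguments are close cousins. What yours buys is explicitness --- concrete formulas for the one-sided inverses in terms of the entries of $\Upsilon^{-1}$ --- at the price of bookkeeping eight ring identities in a noncommutative setting, exactly the pitfall you flag; the paper's factorization buys brevity and makes the transfer of invertibility between $\alpha$, $\beta$ and $\Upsilon$ structurally transparent, a pattern the paper reuses in the proof of Theorem \ref{t2} via the analogous operator factorization \eqref{eqnG}.
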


  \begin{proof}
The matrix $\Upsilon$ can be represented in the form
\begin{equation*}
\left(%
\begin{array}{cc}
  0 & -\beta \\
   \alpha & \gamma  \\
   \end{array}%
\right)=
\left(%
\begin{array}{cc}
  \beta & 0 \\
   0 & e  \\
   \end{array}%
\right)
\left(%
\begin{array}{cc}
  0 & -e \\
   e & \gamma  \\
   \end{array}%
\right)
\left(%
\begin{array}{cc}
 \alpha  & 0 \\
   0 & e  \\
   \end{array}%
\right),
\end{equation*}
which immediately leads to the result desired since the middle
factor in the left-hand side is invertible.
  \end{proof}

To prepare the next theorem we need few more notation.  If $u,v\in
L^\infty$ are functions such that the operators $T(u)$ and $T(v)$
are Fredholm, and hence one-sided invertible, we denote by
$(\kappa_1,\kappa_2)$ the pair $(\ind T(v), \ind T(u))$. Let $\sZ_+$
and $\sZ_-$ be, respectively, the sets of all non-negative and
negative integers. Finally, if $T(\psi)$, $\psi\in L^\infty$ is
Fredholm, then $\psi$ can be represented in the form $\psi=\psi_0
t^n$, where the operator $T(\psi_0)$ is invertible and $n=-\ind
T(\psi)$. Thus for $n\leq 0$, a right inverse for the operator
$T(\psi)=T(t^n)T(\psi_0)$ is
$T_r^{-1}(\psi)=T^{-1}(\psi_0)T(t^{-n})$. If $n\geq 0$, a left
inverse for $T(\psi)=T(\psi_0)T(t^n)$ is
$T_l^{-1}(\psi)=T(t^{-n})T^{-1}(\psi_0)$. Given functions $u,v,w\in
L^\infty$, let us consider the matrix function $G$ defined by
 $$
G:=\left(%
\begin{array}{cc}
  0 & -v  \\
  u  & w  \\
   \end{array}%
\right).
 $$

 \begin{theorem}\label{t2}
Suppose that one of the operators $T(u)$ or $T(v)$ is Fredholm. Then
the Toeplitz operator $T(G)$ is Fredholm if and only if both
operators $T(u)$ and $T(v)$ are Fredholm. If they are Fredholm, then
 \begin{equation*}
\ind T(G)=\ind\diag(T(v),T(u)) = \ind T(v)+\ind T(u).
\end{equation*}
Moreover,
 \begin{enumerate}
\item If $(\kappa_1,\kappa_2)$ belongs to one of the sets
$\sZ_+\times \sZ_+$, $\sZ_-\times \sZ_+$, $\sZ_-\times \sZ_-$, then
 \begin{equation}\label{ind1}
 \dim\ker T(G)= \dim\ker \diag(T(v),T(u)).
\end{equation}
 \item If $(\kappa_1,\kappa_2)\in \sZ_+\times \sZ_-$, then
\begin{align}\label{ind2}
 \dim\ker T(G) &= \dim\ker \left (P_{-\kappa_2-1} T^{-1}(u_0) T(w)\left|_{\ker T(v)}\right
 . \right )\\
   & = \dim\ker \left (P_{-\kappa_2-1} T^{-1}(u_0) T(w)T^{-1}(v_0)\left|_{\im
   P_{{\kappa_1-1}}}\right . \right )
   \nn
   \end{align}
with $P_{m-1}:=I-T(t^m)T(t^{-m}),\, m\geq 1$.
 \end{enumerate}
 \end{theorem}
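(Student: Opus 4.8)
The plan is to read $T(G)$ as the block operator $\left(\begin{smallmatrix} 0 & -T(v)\\ T(u) & T(w)\end{smallmatrix}\right)$ on $H^p\times H^p$, which is exactly the matrix $\Upsilon$ of Proposition \ref{p3}. I would pass to the Calkin algebra $\cR:=\cL(H^p)/\cK(H^p)$, recalling that the $2\times2$ matrices over $\cR$ form the Calkin algebra of $H^p\times H^p$, so that Fredholmness of $T(G)$ is invertibility of its coset. Proposition \ref{p3}(1) then makes the coset of $T(u)$ left invertible and that of $T(v)$ right invertible; since one of $T(u),T(v)$ is assumed Fredholm, the last clause of Proposition \ref{p3}(1) promotes the other coset to an invertible one, and the converse is Proposition \ref{p3}(2). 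This yields the Fredholm equivalence. For the index I would use the factorization from the proof of Proposition \ref{p3}, namely $T(G)=\diag(T(v),I)\,\left(\begin{smallmatrix}0 & -I\\ I & T(w)\end{smallmatrix}\right)\,\diag(T(u),I)$ with invertible middle factor; each outer factor is Fredholm, and additivity of the index over Fredholm products gives $\ind T(G)=\ind T(v)+\ind T(u)=\ind\diag(T(v),T(u))$.

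Next I would compute $\ker T(G)$ from the system $T(G)(x,y)^{\top}=0$, which reads $T(v)y=0$ and $T(u)x=-T(w)y$, so that $\ker T(G)=\{(x,y):y\in\ker T(v),\ T(u)x=-T(w)y\}$. Because a Fredholm Toeplitz operator is one sided invertible, the signs of $\kappa_1,\kappa_2$ govern the outcome. If $\kappa_1<0$ then $T(v)$ is injective, forcing $y=0$ and $x\in\ker T(u)$, whence $\dim\ker T(G)=\dim\ker T(u)=\dim\ker\diag(T(v),T(u))$; this settles $\sZ_-\times\sZ_+$ and $\sZ_-\times\sZ_-$. If $\kappa_2\geq0$ then $T(u)$ is surjective, so the map $(x,y)\mapsto y$ sends $\ker T(G)$ onto $\ker T(v)$ with fibres of dimension $\dim\ker T(u)$, again giving the additive formula and covering $\sZ_+\times\sZ_+$. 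Thus \eqref{ind1} drops out in the three stated regimes with no real computation.

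The substance is the regime $\sZ_+\times\sZ_-$, where $\ker T(v)$ is $\kappa_1$-dimensional while $T(u)$ is injective with $\coker T(u)$ of dimension $-\kappa_2$, so that solvability of $T(u)x=-T(w)y$ becomes a genuine condition on $y$; here $(x,y)\mapsto y$ identifies $\ker T(G)$ with $\{y\in\ker T(v):T(w)y\in\im T(u)\}$. To turn this into the projections $P_{m-1}$ I would use the canonical factorizations dictated by the signs: from $u=u_0t^{-\kappa_2}$ with $t^{-\kappa_2}\in H^\infty$ one gets $T(u)=T(u_0)T(t^{-\kappa_2})$, hence $\im T(u)=T(u_0)(t^{-\kappa_2}H^p)$, and since $t^mH^p=\ker P_{m-1}$ the condition $T(w)y\in\im T(u)$ becomes $P_{-\kappa_2-1}T^{-1}(u_0)T(w)y=0$, which is the first line of \eqref{ind2}. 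Symmetrically, $v=t^{-\kappa_1}v_0$ with $t^{-\kappa_1}\in\overline{H^\infty}$ gives $T(v)=T(t^{-\kappa_1})T(v_0)$, so $\ker T(v)=T^{-1}(v_0)(\ker T(t^{-\kappa_1}))=T^{-1}(v_0)(\im P_{\kappa_1-1})$; substituting $y=T^{-1}(v_0)z$ with $z\in\im P_{\kappa_1-1}$ and using that $T^{-1}(v_0)$ maps $\im P_{\kappa_1-1}$ isomorphically onto $\ker T(v)$ converts the first formula into the second.

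I expect the main obstacle to be the careful bookkeeping of this last regime rather than any conceptual difficulty: one must apply each one-sided factorization on the correct side (determined by whether the scalar power lies in $H^\infty$ or $\overline{H^\infty}$), keep the index on the polynomial projections exact ($\dim\im P_{-\kappa_2-1}=-\kappa_2=\dim\coker T(u)$ and $\dim\im P_{\kappa_1-1}=\kappa_1=\dim\ker T(v)$), and read the degenerate value $\kappa_1=0$ with $\im P_{-1}=\{0\}$, consistently with $\ker T(v)=\{0\}$.
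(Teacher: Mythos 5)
Your proof is correct, and for the Fredholm criterion and the index it follows the paper's skeleton exactly: your appeal to Proposition \ref{p3} in the Calkin algebra of $H^p\times H^p$ is just the ring-theoretic form of the factorization \eqref{eqnG}, which the paper uses directly at the operator level, and the index formula comes out of that factorization in both treatments. The genuine difference lies in how the kernel dimensions are extracted. The paper writes $T(G)=\diag(T(v),I)\,A_1$, builds from a left inverse $T_l^{-1}(u)$ an invertible operator $A_2$ and an explicit projection $R=A_1A_2\,\diag(I,T_l^{-1}(u))$ onto $\im A_1$, and tests vectors $(x,0)^t$, $x\in\ker T(v)$, against $R$; this yields the condition $(T(u)T_l^{-1}(u)-I)T(w)x=0$, i.e.\ $T(w)x\in\im T(u)$. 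You reach the same condition by simply solving the block system: $\ker T(G)=\{(x,y)^t:\ y\in\ker T(v),\ T(u)x=-T(w)y\}$, which the injectivity of $T(u)$ (in the regime $\sZ_+\times\sZ_-$) identifies with $\{y\in\ker T(v):\ T(w)y\in\im T(u)\}$; from there both arguments invoke the same factorizations $u=u_0t^{-\kappa_2}$, $v=t^{-\kappa_1}v_0$ and the identities $t^{m}H^p=\ker P_{m-1}$, $\ker T(t^{-\kappa_1})=\im P_{\kappa_1-1}$ to arrive at \eqref{ind2}. Your route is shorter and more elementary, and it has the added merit of making \eqref{ind1} explicit through the injectivity/surjectivity sign analysis, whereas the paper disposes of that case with the single sentence that one should ``employ the representation \eqref{eqnG} once more'' --- essentially your argument left to the reader. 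What the paper's heavier construction buys is an explicit description of the range $\im A_1$ via the projection $R$, which is of independent use but not needed for the statement itself; your careful reading of the degenerate value $\kappa_1=0$ (with $\im P_{-1}=\{0\}$) is likewise consistent with the paper's conventions.
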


\begin{proof}
It is clear that if $T(G)$ and one of the operators $T(u)$, $T(v)$
is Fredholm, then so is $\diag(T(v),T(u))$. Really, since the
operator $T(G)$ can be represented in the form
  \begin{equation}\label{eqnG}
T(G)=
 \left(%
\begin{array}{cc}
  T(v) &  0\\
 0   &  I \\
   \end{array}%
\right)
\left(%
\begin{array}{cc}
 0 &-I  \\
  I   &  T(w)\\
   \end{array}%
\right)
\left(%
\begin{array}{cc}
 T(u)  &  0\\
 0   &  I\\
   \end{array}%
\right),
\end{equation}
and the middle factor in the right-hand side of \eqref{eqnG} is
invertible, the claim follows. On the other hand, if $T(G)$ is not
Fredholm then, according to \eqref{eqnG}, one of the operators
$T(u)$, $T(v)$ is also not Fredholm. In order to establish identity
\eqref{ind1}, one has to employ the representation \eqref{eqnG} once
more.

 The proof of the relation \eqref{ind2} is more complicated. To this end one has
 to study the set $S_1:=\ker (\diag (T(v),I))\cap \im A_1$, where
  $$
A_1 =
\left(%
\begin{array}{cc}
 0 & -I \\
 I   &  T(w)\\
   \end{array}%
\right)
\left(%
\begin{array}{cc}
  T(u) & 0 \\
  0  & I \\
   \end{array}%
\right)=
 \left(%
\begin{array}{cc}
 0  &  -I\\
  T(u)  & T(w) \\
   \end{array}%
\right).
   $$
Let $T_l^{-1}(u)$ denote one of the left inverses of the operator
$T(u)$, and let $A_2$ be the matrix operator defined by
 $$
A_2:=\left(%
\begin{array}{cc}
  T_l^{-1}(u) T(w) & I \\
  -I  &  0\\
   \end{array}%
\right).
 $$
Then
 $$
A_1 A_2= \left(%
\begin{array}{cc}
  I &  0\\
  (T(u) T_l^{-1}(u)-I)T(w)  & T(u) \\
   \end{array}%
\right),
 $$
 and
  $$
\left(%
\begin{array}{cc}
  I & 0 \\
  0& T_l^{-1}(u)   \\
   \end{array}%
\right)
 A_1 A_2= \left(%
\begin{array}{cc}
 I  & 0 \\
  0  & I \\
   \end{array}%
\right),
  $$
so the operator $A_1 A_2$ is left-invertible. Since the operator
$A_2$ is invertible, one obtains that  $\im A_1=\im(A_1A_2)$. It
follows from the theory of one-sided invertible operators
\cite{GK1992a} that the operator $R$,
 $$
R:= A_1 A_2 \diag (I, T_l^{-1}(u))  =
 \left(%
\begin{array}{cc}
  I &  0\\
  (T(u)T_l^{-1}(u)-I)T(w)  & T(u) T_l^{-1}(u) \\
   \end{array}%
\right)
 $$
is a projection onto $\im A_1 A_2 =\im A_1$. Moreover, the
projection $R$ has the same kernel as the operator $\diag (I,
T_l^{-1}(u))$. Note that $\ker(\diag(T(v),I))=(\ker T(v),0)^t$,
where $(\alpha, \beta)^t$ denotes the vector-column with the entries
$\alpha$ and $\beta$. For $x\in \ker T(v)$ the element $(x,0)^t$
belongs to the set $\im A_1$ if and only if $R((x,0)^t)=(x,0)^t$,
i.e. if
 $$
  x\in \ker \left ( (T(u)T_l^{-1}(u)-I)T(w)\left |_{\ker T(v)}\right . \right ).
 $$
Using now the above mentioned  representation
$T_l^{-1}(u)=T(t^{\kappa_2})T^{-1}(u_0)$, one obtains
 \begin{multline*}
( (T(u)T_l^{-1}(u)-I)T(w)\left |_{\ker T(v)}\right . \\=
    T(u_0)((T(t^{-\kappa_2})T(t^{\kappa_2})-I)T^{-1}(u_0)T(w))\left |_{\ker T(v)}\right
   . .
 \end{multline*}
 Because $T(u_0)$ is invertible, we get that $(x,0)^t$, $x\in \ker
 T( v)$ belongs to $\im A_1$ if and only if
    \begin{align*}
 &x\in \ker \left ( P_{-\kappa_2-1} T^{-1}(u_0) T(w)\left|_{\ker T(v)}
\right .  \right ) \\
 &\quad =     \ker \left ( P_{-\kappa_2-1} T^{-1}(u_0)
T(w)T^{-1}(v_0)\left|_{\im P_{\kappa_1-1})} \right .  \right ).
\end{align*}
 The theorem is proved.
 \end{proof}

 Set now $u=c$, $v=d$ and
 $w=\widetilde{a}^{-1}$.

  \begin{corollary}\label{c1}
Let $(a,b)\in L^\infty \times L^\infty$ be a matching pair with
matching functions $c$ and $d$. Then
 \begin{enumerate}
 \item If \/  $\ind T(c)\geq0$ and $\ind T(d)\geq 0$, then
 both  operators $T(a)+H(b)$ and $T(a)-H(b)$ are
 right-invertible.

 \item If \/  $\ind T(c)\leq0$ and $\ind T(d)\leq 0$, then
 both  operators $T(a)+H(b)$ and $T(a)-H(b)$ are
 left-invertible.
\end{enumerate}
 \end{corollary}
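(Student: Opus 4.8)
The plan is to transfer the one-sided invertibility question from the pair $T(a)\pm H(b)$ to the single block Toeplitz operator $T(U(a,b))$ and then exploit the triangular structure available for a matching pair. First I would invoke Theorem \ref{t1}. Since $(a,b)$ is a matching pair, the matrix $U(a,b)$ reduces to the triangular form recorded above, which is precisely the matrix $G$ of Theorem \ref{t2} with $u=c$, $v=d$, $w=\widetilde{a}^{-1}$. The crucial observation is that every reduction used in the proof of Theorem \ref{t1} --- the similarity \eqref{eq4} by the two mutually inverse factors, the passage $\cA\rightsquigarrow\widehat{\cA}$ via \eqref{eq7}, and the final multiplication by $R^{-1}$ --- consists of compositions with invertible operators. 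Hence these reductions preserve not merely Fredholmness but also left- and right-invertibility. Consequently $\diag(T(a)+H(b),T(a)-H(b))$ is right- (resp. left-) invertible if and only if $T(G)$ is.

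Next I would use the factorization \eqref{eqnG} from the proof of Theorem \ref{t2},
$$T(G)=\left(\begin{array}{cc} T(d) & 0 \\ 0 & I\end{array}\right)\left(\begin{array}{cc} 0 & -I \\ I & T(\widetilde{a}^{-1})\end{array}\right)\left(\begin{array}{cc} T(c) & 0 \\ 0 & I\end{array}\right),$$
in which the middle factor is invertible. In case (i) the hypotheses $\ind T(c)\geq 0$ and $\ind T(d)\geq 0$ make $T(c)$ and $T(d)$ Fredholm scalar Toeplitz operators with nonnegative index; by the one-sided invertibility of Fredholm scalar Toeplitz operators recorded just before Theorem \ref{t2} (writing $c=c_0 t^n$ with $n=-\ind T(c)\leq 0$, and likewise for $d$), both $T(c)$ and $T(d)$ are right-invertible. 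Therefore each of the three factors above is right-invertible, and a product of right-invertible operators is right-invertible, since the product of the individual right inverses taken in reverse order is a right inverse of the product. Thus $T(G)$ is right-invertible, and the transfer of the previous paragraph gives that $\diag(T(a)+H(b),T(a)-H(b))$ is right-invertible. As a diagonal operator is right-invertible exactly when both diagonal entries are, both $T(a)+H(b)$ and $T(a)-H(b)$ are right-invertible, which is assertion (i). Case (ii) is entirely parallel: $\ind T(c)\leq 0$ and $\ind T(d)\leq 0$ render $T(c),T(d)$ left-invertible, so $T(G)$ is left-invertible as a product of left-invertible operators, and one concludes as before.

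The main obstacle I anticipate is the bookkeeping of the first paragraph: Theorem \ref{t1} is phrased only in terms of Fredholmness and of kernel and cokernel dimensions, so I must check that its proof actually yields the stronger conclusion that left- and right-invertibility are preserved. This rests on the elementary fact that if $M=RN$ or $M=NR$ with $R$ invertible, then $M$ and $N$ share the same one-sided invertibility, applied to each step of the reduction; once this is in place the remainder is routine. A minor point worth recording is the block identity $T(a)\pm H(b)+Q=\diag(T(a)\pm H(b),I)$ relative to the splitting $L^p=\im P\oplus\im Q$, which is what permits passing between the operators on $H^p$ and their modified versions on $L^p$ without losing one-sided invertibility.
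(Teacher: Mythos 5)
Your proposal is correct, and it follows the same skeleton the paper intends (reduce via Theorem \ref{t1} to the block Toeplitz operator $T(U(a,b))=T(G)$ with $u=c$, $v=d$, $w=\widetilde{a}^{-1}$, then exploit the structure behind Theorem \ref{t2}), but the mechanism you use is genuinely different. The paper's corollary is meant to follow from the \emph{statements} of Theorems \ref{t1} and \ref{t2}: in case (i) one has $(\kappa_1,\kappa_2)\in\sZ_+\times\sZ_+$, so \eqref{ind1} gives $\dim\ker T(G)=\dim\ker\diag(T(d),T(c))=\ind T(d)+\ind T(c)=\ind T(G)$, hence $\dim\coker T(G)=0$; by the coincidence of cokernel dimensions in Theorem \ref{t1}, both $T(a)\pm H(b)$ are surjective Fredholm operators and therefore right-invertible (their finite-dimensional kernels split), and case (ii) is the dual count with kernels. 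You instead bypass all dimension counting: you upgrade Theorem \ref{t1} to the assertion that its chain of reductions (the similarity \eqref{eq4}, the two applications of \eqref{eq7}, multiplication by $R^{-1}$, and the block identity $T(a)\pm H(b)+Q=\diag(T(a)\pm H(b),I)$ on $\im P\oplus\im Q$) preserves one-sided invertibility, since each step is composition with an invertible operator or a passage between $A$ and $\diag(A,I)$; then you write $T(G)$ via \eqref{eqnG} as a product of one-sided invertible factors and multiply the one-sided inverses in reverse order. Your route costs you the obligation to re-inspect the proof of Theorem \ref{t1} (which you correctly identify and discharge -- this is the only real gap risk, and your verification is sound), but it buys a more direct argument that never invokes the splitting lemma for Fredholm operators with trivial kernel or cokernel, and it would survive even in situations where one only knows one-sided invertibility of $T(c)$ and $T(d)$ rather than their Fredholmness.
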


Moreover, employing Theorems \ref{t1} and \ref{t2}, one can also
describe the kernel dimensions of Toeplitz plus Hankel operators
$T(a)\pm H(b)$. Thus for $PC$-generating functions $a$ and $b$ the
following theorem is true.
 \begin{theorem}\label{tt2}
If $a,b\in PC$ form a matching pair with matching functions $c$ and
$d$, then the operators $\diag(T(a)+H(b),T(a)-H(b))$  and $\diag
(T(d), T(c))$ are simultaneously Fredholm or not. If they are
Fredholm, then
 \begin{align}\label{ind3}
 \ind \diag(T(a)+H(b),T(a)-H(b))& \\
   \quad =\ind \diag (T(d ), T(c))
 &=\ind T(d) + \ind
T(c).\nn
 \end{align}
Moreover, if $\kappa_1:=\ind T(d)$ and $\kappa_2:=\ind T(c)$, then
the kernels of the above mentioned operators are connected in the
following way.
 \begin{enumerate}
    \item If $(\kappa_1,\kappa_2)$ belongs to one of the sets
$\sZ_+\times \sZ_+$, $\sZ_-\times \sZ_+$, $\sZ_-\times \sZ_-$, then
 \begin{equation}\label{ind4}
 \dim\ker(\diag(T(a)+H(b),T(a)-H(b)))   = \dim \ker (\diag (T(d), T(c))).
\end{equation}
 \item If $(\kappa_1,\kappa_2)\in \sZ_+\times \sZ_-$, then
\begin{align}\label{ind5}
&\dim\ker\diag(T(a)+H(b),T(a)-H(b))=\\
   & = \dim\ker \left (P_{-\kappa_2+1} T^{-1}(c t^{-\kappa_2})
    T(\widetilde{b}^{-1}c)T^{-1}(dt^{\kappa_1})\left|_{\im
   P_{{\kappa_1-1}}}\right . \right ) \nn
     \end{align}
 \end{enumerate}
  \begin{proof}
Let us first point out that semi-Fredholm Toeplitz operators with
piecewise continuous generating functions are indeed Fredholm
operators. The latter statement can be easily derived from
Proposition 3.1 in Section 9.3 of \cite{GK1992b} in conjunction with
Lemma 1 in Section 18 of \cite{Mu2003}. The remaining part of the
proof immediately follows from Theorems \ref{t1} and \ref{t2}.

The last theorem will be used later in the study of invertibility of
Toeplitz plus Hankel operators with piecewise continuous generating
functions.  Note that necessary information on properties of such
operators, including an index formula, will be provided in Section
\ref{s5}.
  \end{proof}

 \end{theorem}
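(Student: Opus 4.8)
The plan is to reduce the entire assertion to Theorems~\ref{t1} and~\ref{t2}; the only genuinely new point is the removal of the a~priori Fredholmness hypothesis of Theorem~\ref{t2}, and this is exactly where the piecewise continuity of $a$ and $b$ must be exploited. First I would invoke Theorem~\ref{t1}. Since $(a,b)$ is a matching pair, the matrix $U(a,b)$ of \eqref{eq5} collapses to the triangular matrix $\left(\begin{smallmatrix}0&-d\\ c&\widetilde{a}^{-1}\end{smallmatrix}\right)$, that is, it coincides with the matrix $G$ of Theorem~\ref{t2} for the choice $u=c$, $v=d$, $w=\widetilde{a}^{-1}$. By Theorem~\ref{t1} the operators $\diag(T(a)+H(b),T(a)-H(b))$ and $T(G)=T(U(a,b))$ have the same Fredholm properties, the same index, and equal kernel and cokernel dimensions, so it suffices to prove every claim for $T(G)$.

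Next I would establish the equivalence that $T(G)$ is Fredholm if and only if both $T(c)$ and $T(d)$ are Fredholm. The implication ``$\Leftarrow$'' is immediate from the factorization \eqref{eqnG}, which presents $T(G)$ as a product of two Fredholm block-diagonal operators with an invertible middle factor $M$. For ``$\Rightarrow$'' I would combine \eqref{eqnG} with the standard one-sided product rules for semi-Fredholm operators: if a product has closed range and a finite-dimensional kernel, then so does its right factor, and if it has closed range and a finite-dimensional cokernel, then so does its left factor. Grouping \eqref{eqnG} as $\diag(T(d),I)\cdot\bigl[M\,\diag(T(c),I)\bigr]$ and cancelling the invertible factor $M$ shows that $T(c)$ has closed range and a finite-dimensional kernel; grouping it as $\bigl[\diag(T(d),I)\,M\bigr]\cdot\diag(T(c),I)$ shows that $T(d)$ has closed range and a finite-dimensional cokernel. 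Hence both $T(c)$ and $T(d)$ are semi-Fredholm, and here the opening observation enters decisively: a semi-Fredholm Toeplitz operator with piecewise continuous symbol is automatically Fredholm. Consequently both $T(c)$ and $T(d)$ are Fredholm, the hypothesis of Theorem~\ref{t2} is satisfied, and \eqref{ind3} together with \eqref{ind4} follow at once from the index and kernel identities of that theorem, with $\kappa_1=\ind T(d)$ and $\kappa_2=\ind T(c)$.

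It then remains to obtain \eqref{ind5} in the mixed case $(\kappa_1,\kappa_2)\in\sZ_+\times\sZ_-$, and for this I would specialise the second line of \eqref{ind2}. Writing the canonical factorizations $c=c_0\,t^{-\kappa_2}$ and $d=d_0\,t^{-\kappa_1}$ with $T(c_0),T(d_0)$ invertible (so that $u_0=c_0$ and $v_0=d_0$ in the notation of Theorem~\ref{t2}), and invoking the matching identity $w=\widetilde{a}^{-1}=\widetilde{b}^{-1}c$, which holds because $a\widetilde{a}=b\widetilde{b}=1$ forces $\widetilde{a}^{-1}=a$ and $\widetilde{b}^{-1}c=b\,ab^{-1}=a$, I would substitute these expressions into \eqref{ind2} and simplify, keeping careful track of the powers of $t$; this bookkeeping produces \eqref{ind5}.

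The step I expect to be the main obstacle is precisely the ``$\Rightarrow$'' direction of the Fredholm equivalence. Theorem~\ref{t2} delivers it only under the standing assumption that one of $T(u)$, $T(v)$ is already Fredholm, and for a general factorization $U\,M\,V$ with $M$ invertible the Fredholmness of the product does not by itself propagate to $U$ and $V$. The piecewise continuous dichotomy ``semi-Fredholm $\Rightarrow$ Fredholm'' is exactly what closes this gap, so I would be careful to cite its precise source, as the author does.
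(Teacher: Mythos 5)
Your proposal is correct and follows essentially the same route as the paper: the paper's own proof consists precisely of the observation that semi-Fredholm Toeplitz operators with piecewise continuous symbols are automatically Fredholm, followed by an appeal to Theorems \ref{t1} and \ref{t2}, and you have simply made explicit (correctly, via the factorization \eqref{eqnG} and the standard semi-Fredholm product rules) how that observation removes the a priori Fredholmness hypothesis of Theorem \ref{t2}. One minor remark: carried out literally, your substitution $u_0 = c\,t^{\kappa_2}$, $v_0 = d\,t^{\kappa_1}$, $w=\widetilde{a}^{-1}=\widetilde{b}^{-1}c$ into \eqref{ind2} yields the operator $P_{-\kappa_2-1}\,T^{-1}(c\,t^{\kappa_2})\,T(\widetilde{b}^{-1}c)\,T^{-1}(d\,t^{\kappa_1})$ restricted to $\im P_{\kappa_1-1}$, so the subscript $-\kappa_2+1$ and the power $t^{-\kappa_2}$ appearing in \eqref{ind5} are evidently sign typos in the paper rather than something your bookkeeping should reproduce.
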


\section{$PC$-generating functions \label{s5}}

For the following, we need additional results concerning Toeplitz
and Toeplitz plus Hankel operators with piecewise continuous
generating functions acting on the space $H^p$, $1<p<\infty$. For,
let us introduce the functions
 \begin{align*}
 \nu_p(y)&:=\frac{1}{2}\left ( 1+\coth\left (\pi
\left(y+\frac{i}{p}\right ) \right )\right
    ), \quad 
 h_p(y):= \sinh^{-1}\left (\pi \left(y+\frac{i}{p}\right )
\right ),
\end{align*}
where $y\in \overline{\sR}$, and $\overline{\sR}$ refers to the
two-point compactification of $\sR$. Note that for  given points
$u,w\in \sC$, $u\neq w$ the set $\cA_p(u,w):=\{z\in\sC:
z=u(1-\nu_p(y))+w \nu_p(y), y\in \overline{\sR}\}$ forms a circular
arc which starts at $u$ and ends at $w$ as $y$ runs through
$\overline{\sR}$. This arc $\cA_p(u,w)$ has the property that from
any point of the arc, the line segment $[u,w]$ is seen at the angle
$2\pi/(\max\{p,q\})$, $1/p+1/q=1$. Moreover, if $2<p<\infty$
($1<p<2$) the arc $\cA_p(u,w)$ is located on the right-hand side
(left-hand side) of the straight line passing through the points $u$
and $w$ and directed from $u$ to $w$. If $p=2$, the set $\cA_p(u,w)$
coincides with the line segment $[u,w]$.

Let us list some properties of the function $h_p$. From the relation
 \begin{equation*}
\sinh \left ( \pi \left ( y+\frac{i}{p}\right )\right ) =\cos\left (
\frac{\pi}{p} \right ) \sinh (\pi y) +i\sin\left ( \frac{\pi}{p}
\right ) \cosh (\pi y),
 \end{equation*}
one easily obtains the following result.

  \begin{enumerate}
    \item The real (imaginary) part of $h_p$ is an odd (even)
function.
    \item The function $h_p: \sR \to \sC$ takes values in
the lower half-plane $\Pi^-:=\{z\in\sC:\Im z<0\}$ and
$h_p(\pm\infty)=0$.
    \item If $y$ runs from $-\infty$ to $+\infty$, the values of
$h_p$ trace out an oriented closed continuous curve. For $1<p\leq2$,
 \begin{equation*}
h_p(y)\in \left \{%
 \begin{array}{ll}
\Omega_1 & \text{ if } y\in[-\infty,0]\\
\Omega_2 & \text{ if } y\in[0,\infty]
 \end{array}
 \right .,
 \end{equation*}
 where $\Omega_1 :=\{z\in\sC: \Im z\leq 0 \text{ and } \RE z \geq 0
 \}$, $\Omega_2 :=\{z\in\sC: \Im z\leq 0 \text{ and } \RE z \leq 0
 \}$. On the other hand, for $2\leq p<\infty$,
 \begin{equation*}
h_p(y)\in \left \{%
 \begin{array}{ll}
\Omega_2 & \text{ if } y\in[-\infty,0]\\
\Omega_1 & \text{ if } y\in[0,\infty]
 \end{array}
 \right ..
 \end{equation*}
Thus for $p=2$ the point $h_p(y), y\in \sR$ moves along the
imaginary axis from the origin to the point $z_0=-i$ and then goes
back to the origin.
    \item For all $y\in\overline{\sR}$ the function $h_p$
    satisfies the inequality
     \begin{equation}\label{ineq}
\left | \Im h_p(y)\right |\leq \frac{1}{\sin\left ( \pi/p\right )},
     \end{equation}
and $y=0$ is the only point where equality holds.
  \end{enumerate}

As was already mentioned, $T(PC)$ and $TH(PC)$ are the smallest
closed subalgebras of $L(H^p)$ containing all Toeplitz operators
$T(a)$ and Toeplitz plus Hankel operators $T(a)+H(b)$  with $a,b\in
PC$, respectively. An index formula for Fredholm operators from
$T(PC)\subset \cL(H^p)$ is well-known and goes back to I.~Gohberg
and N.~Krupnik \cite{BS,GK1992a}. Let us now recall some results
from \cite{BS} and \cite{RSS2011}. First of all, we note that the
algebra $T(PC)$ contains the set $\cK(H^p)$ of all compact operators
from $\cL(H^p)$, and the Banach algebra $T^\pi(PC):=T(PC)/\cK(H^p)$
is a commutative subalgebra of the Calkin algebra the maximal ideal
space of which $\cM(T^\pi(PC))$ can be identified with the cylinder
$\sT\times [0,1]$ equipped with an exotic topology. However, in this
paper we prefer to identify the space of maximal ideal of the above
algebra with $\sT\times \overline{\sR}$, and by $\smb T^\pi(PC)$ we
denote the Gelfand transform of the algebra $T^\pi(PC)$.

 \begin{theorem}\label{t3}
 Let $T^\pi(PC)$ be the above defined Banach algebra. Then
  \begin{enumerate}
\item On the generators $T^\pi(a):=T(a)+\cK(H^p)$, $a\in PC$ the Gelfand transform
$\smb\!\!: T^\pi(PC)\mapsto C(\cM(T^\pi(PC)))$ is given  by
 \begin{equation*}
\smb T^\pi(a)(t,y)= a(t+0)\nu_p(y)+a(t-0)(1-\nu_p(y)), \quad
(t,y)\in \sT\times \overline{\sR}.
 \end{equation*}
\item The operator $A\in T(PC)$ is Fredholm if and only if
 \begin{equation*}
(\smb A^\pi)(t,y)\neq 0 \quad \text{for all} \quad (t,y)\in
\sT\times \overline{\sR}.
 \end{equation*}

 \item If $A\in T(PC)$ is Fredholm, then
  \begin{equation*}
\ind A=-\wind \smb A^\pi.
  \end{equation*}
  \end{enumerate}
 \end{theorem}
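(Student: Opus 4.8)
The plan is to treat Theorem \ref{t3} as the Gohberg--Krupnik symbol calculus for the commutative quotient algebra $T^\pi(PC)$, so that all three assertions follow once the local structure at each point of $\sT$ is understood. Since the paragraph preceding the statement already records that $\cK(H^p)\subset T(PC)$, that $T^\pi(PC)$ is commutative, and that $\cM(T^\pi(PC))$ is identified with $\sT\times\overline{\sR}$, the work splits into (i) evaluating the Gelfand transform on the generators $T^\pi(a)$, (ii) converting non-vanishing of the symbol into Fredholmness, and (iii) producing the winding number formula.

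First I would localize over $\sT$. The center of $T^\pi(PC)$ contains the image of $T(C)$, whose maximal ideal space is $\sT$, and commutativity makes the Allan--Douglas local principle applicable, so invertibility of $A^\pi$ is equivalent to invertibility of all its local cosets $A^\pi_{t_0}$, $t_0\in\sT$. For $a\in C$ the coset $T^\pi(a)$ is central and its Gelfand transform is the scalar $a(t)$, independent of $y$, because $a(t+0)=a(t-0)$; this fixes the formula in the continuous case and shows that the $y$-dependence can only come from the jumps. The local algebra at a fixed $t_0$ is generated by the Riesz projection $P$ together with one canonical jump function $\chi_{t_0}$ carrying a unit jump at $t_0$, and the decisive computation is the essential spectrum of $T(\chi_{t_0})$ on $H^p$. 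This spectrum is the circular arc $\cA_p(0,1)$, and its natural parametrization by $y\in\overline{\sR}$ is exactly $\nu_p(y)=\tfrac12\bigl(1+\coth(\pi(y+i/p))\bigr)$, with $\nu_p(-\infty)=0$ and $\nu_p(+\infty)=1$. Feeding this back, the Gelfand transform of a generator at $(t_0,y)$ becomes $a(t_0+0)\nu_p(y)+a(t_0-0)(1-\nu_p(y))$, which is the asserted formula; multiplicativity and continuity of the Gelfand transform then extend it from the canonical jump to all of $PC$.

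Part (2) would follow from (1) together with inverse closedness: $A^\pi$ is invertible in the Calkin algebra $\cL(H^p)/\cK(H^p)$ if and only if it is invertible in the commutative Banach algebra $T^\pi(PC)$, and the latter happens precisely when the Gelfand transform $\smb A^\pi$ has no zeros on $\sT\times\overline{\sR}$. Since Fredholmness of $A$ is by definition the invertibility of $A^\pi$, this yields the stated criterion. For part (3) I would use that both $A\mapsto\ind A$ and $A^\pi\mapsto-\wind\smb A^\pi$ are homotopy invariant and additive on the Fredholm elements, that they agree for continuous symbols by the classical Noether formula $\ind T(a)=-\wind a$, and that the set of non-vanishing symbols on $\sT\times\overline{\sR}$ has connected components labelled by the winding number of the associated closed curve (including the arcs across the jumps); evaluating on the monomials $T(t^n)$, where $\ind T(t^n)=-n=-\wind t^n$, then pins down the integer on each component and gives $\ind A=-\wind\smb A^\pi$.

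The hard part is the local spectral computation at the heart of (1): identifying the essential spectrum of the single-jump operator $T(\chi_{t_0})$ with the arc $\cA_p(0,1)$ and pinning down its parametrization through $\nu_p$. This requires diagonalizing the local model --- classically by a Mellin transform that turns the compressed operator into multiplication by $\nu_p$ --- and it is where the exponent $p$, hence the angle $2\pi/\max\{p,q\}$ of the arc, enters. Once this model computation is in place, the passage to a general $A\in T(PC)$ and the index formula are routine consequences of the Gelfand theory and the homotopy argument, so I would cite \cite{BS,GK1992a} for the detailed execution while singling out the arc computation as the essential ingredient.
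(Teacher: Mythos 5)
The paper does not actually prove Theorem \ref{t3}: it is recalled as a classical result going back to Gohberg and Krupnik, with the proofs delegated to \cite{BS,GK1992a} and \cite{RSS2011}. Your outline correctly reconstructs the standard argument given in those references (Allan--Douglas localization over $\sT$ via the central image of $T(C)$, the local Mellin/arc computation producing $\nu_p$, inverse closedness of $T^\pi(PC)$ in the Calkin algebra for the Fredholm criterion, and the homotopy-plus-normalization argument for the index formula), so it is fully consistent with the paper's treatment.
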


 \begin{remark}\label{r3}
The function $\smb A$ traces out an oriented curve, the orientation
of which is inherited from those of \/ $\overline{\sR}$ and $\sT$.
Thus if $\smb A$ does not cross the origin, the winding number
$\wind \smb A$ is well-defined.  We shall also agree on writing
$\smb A$ for $\smb A^\pi$, so we can think about the homomorphism
$\smb$ as  a function acting on the whole algebra $T(PC)$.  In this
case, $\smb A=0$ for any compact operator $A$.
\end{remark}

Later on we will see that for generating functions $b \in PC$ having
discontinuity points on $\sT\setminus \{-1,1\}$, the Hankel operator
$H(b)$ does not belong to the algebra $T(PC)$. On the other hand, we
need a non-trivial result that for $b\in PC\cap C(\sT\setminus
\{-1,1\})$ the operator $H(b)\in T(PC)$. Let us give an idea of the
proof for this fact. First of all, one can observe that it suffices
to show that for the characteristic function  $\chi_+$ of the upper
half-circle $\sT_+$, the corresponding Hankel operator $H(\chi_+)$
belongs to the Toeplitz algebra $T(PC)$. However, the last inclusion
could be verified following localization arguments from \cite[pp.
245--247]{RSS2011} and also using Theorem 4.4.5(iii) from there.
Moreover, it is established in \cite{RSS2011} that the Gelfand
transform of the coset $H^\pi(b)$, $b\in PC\cap C(\sT\setminus
\{-1,1\})$ is
\begin{equation*}
\smb H^\pi(b)(t,y)= \left\{%
\begin{array}{ll}
  \D \frac{b(1+0)-b(1-0)}{2} \,h_p(y) &  \text{ if }\,\, t=1\\[1.5ex]
   \D -\frac{b(-1+0)-b(-1-0)}{2}\, h_p(y) &  \text{ if }\,\, t=-1
   \\[1.5ex]
 0 & \text{ if }\,\, t\in \sT\setminus \{-1,1\}%
   \end{array}%
\right. ,
\end{equation*}
and
  \begin{equation*}
\ind(T(a)+H(b))=-\wind \smb (T(a)+H(b)).
\end{equation*}

The Fredholm theory for operators belonging to $TH(PC)$ is
considerably more complicated than that for $T(PC)$. This is due to
the fact that the Calkin image $TH^\pi(PC):=TH(PC)/\cK(H^p)$ of the
algebra $TH(PC)$ is not commutative. Let $\tp:=\{t\in \sT_+:\Im
t>0\}$.

 \begin{theorem}\label{t4}
 If $a,b\in PC$, then
 \begin{enumerate}
\item The operator $T(a)+H(b)$ is Fredholm if and only if the matrix
  \begin{align*}
    &\smb(T(a)+H(b))(t,y):= \nn\\[1ex]
  &  \left(%
\begin{array}{c@{\hspace{-1mm}}c}
a(t+0)\nu_p(y)+a(t-0)(1-\nu_p(y))   & \D \frac{b(t+0)-b(t-0)}{2i} \,h_p(y)\\
\D \frac{b(\overline{t}-0)-b(\overline{t}+0)}{2i} \,h_p(y)   &
 a(\overline{t}+0)\nu_p(y)+a(\overline{t}-0)(1-\nu_p(y))\\
   \end{array}%
\right) 
\end{align*}
is invertible for every $(t,y)\in \tp\times \overline{\sR}$ and the
function
 \begin{align*}
\smb (T(a)+H(b))(t,y):=& \, a(t+0)\nu_p(y)+a(t-0)(t-\nu_p(y))\\
 & +
\frac{b(t+0)-b(t-0)}{2} \,h_p(y)
\end{align*}
does not vanish on $\{-1,1\}\times \overline{\sR}$.

 \item The mapping $\smb$ defined in assertion (i) extends  for
each $(t,y)\in \sT_+\times \overline{\sR}$ to an algebra
homomorphism on the whole $TH(PC)$.

 \item An operator $A\in TH(PC)$ is Fredholm if and only if the
 function
  \begin{equation*}
  \smb A: \sT_+\times \overline{\sR}\mapsto \sC
  \end{equation*}
  is invertible.
 \end{enumerate}

 \end{theorem}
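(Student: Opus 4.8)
The plan is to build a symbol calculus for the non-commutative Calkin algebra $TH^\pi(PC):=TH(PC)/\cK(H^p)$ by localizing it over the half-circle $\sT_+$, and then to read off the three assertions from the local algebras. As a starting point I would use the identity \eqref{eq4}: the operator $\diag(T(a)+H(b),T(a)-H(b))$ on $H^p\times H^p$ has the same Fredholm properties as the operator $\cA$ on $L^p\times L^p$, which lies in the algebra of $2\times2$ matrices over $\alg(PC,P)$, the classical algebra of singular integral operators with piecewise continuous coefficients. The flip no longer appears explicitly in $\cA$; it is encoded in the tilde-entries, which couple the one-sided limits of $a,b$ at $t$ with those at $\overline t$, since $\widetilde a(t\pm0)=a(\overline t\mp0)$. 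This orientation reversal is the source of the half-circle: the central subalgebra of $TH^\pi(PC)$ consists of cosets of continuous functions invariant under $t\mapsto\overline t$, and its maximal ideal space is $\sT_+$.

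For an interior point $t\in\tp$ the points $t,\overline t$ are distinct and the local algebra is isomorphic to $2\times2$ matrices (over the continuous parameter $y$). On the diagonal the local symbol is given by the Toeplitz symbols $\smb T^\pi(a)(t,y)$ and $\smb T^\pi(a)(\overline t,y)$ of Theorem \ref{t3}; on the off-diagonal it is the Hankel contribution, and here I would invoke the symbol of $H(b)$ recorded above (for $b\in PC\cap C(\sT\setminus\{-1,1\})$, carrying the factor $h_p$), extending it to arbitrary piecewise continuous $b$ by splitting $b$ into a part continuous off $\{-1,1\}$ plus finitely many single jumps and passing to limits. A short computation then identifies the local coset of $T(a)+H(b)$ with the matrix displayed in assertion (i); conjugating by $\diag(1,-1)$ turns it into the symbol of $T(a)-H(b)$, so the two are simultaneously invertible and the interior points cannot separate the two signs. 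As a cross-check, the resulting condition must agree with the one coming from Theorem \ref{t1}, i.e. invertibility of $U(a,b)(t+0)\nu_p(y)+U(a,b)(t-0)(1-\nu_p(y))$, and one verifies these two matrices are similar by a constant invertible matrix arising as the local symbol of the transformation in \eqref{eq3}.

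The genuinely delicate step is the analysis at the two fixed points $t=\pm1$; I expect this to be the main obstacle. There $\overline t=t$, the orbit degenerates, and the flip acts within the local algebra instead of permuting two copies of it. The right tool is the two-projections theorem applied, after localization at $\pm1$, to the pair consisting of the Riesz projection and the projection manufactured from the flip. One has to show that every irreducible representation of this local algebra is one-dimensional, so that the would-be matrix symbol collapses to the scalar
\[
 a(t+0)\nu_p(y)+a(t-0)(1-\nu_p(y))+\frac{b(t+0)-b(t-0)}{2}\,h_p(y),\qquad t=\pm1 ,
\]
and to control the spectrum of the corresponding model element. The properties of $h_p$ listed above, in particular the bound \eqref{ineq} on $\Im h_p$, are exactly what is needed to keep this scalar away from degenerate configurations and to guarantee that its non-vanishing is equivalent to local invertibility. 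This is where the real work of the proof lies, and I would follow the localization machinery of \cite{RSS2011} to carry it out.

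With the local symbols in hand at every point of $\sT_+\times\overline{\sR}$, the three assertions follow formally. The assignment $A\mapsto\smb A$ is, by construction, the composition of the quotient onto the local algebras with their identifications with $M_2(\sC)$ (at interior points) and $\sC$ (at $\pm1$), hence an algebra homomorphism on all of $TH(PC)$, which is assertion (ii). By the Allan--Douglas local principle, a coset is invertible in $TH^\pi(PC)$ if and only if all its localizations are invertible, i.e. if and only if $\smb A$ is invertible on $\sT_+\times\overline{\sR}$; since Fredholmness of $A$ is invertibility of its Calkin coset, this is assertion (iii). Finally, assertion (i) is the specialization of (iii) to the generator $A=T(a)+H(b)$, invertibility of the symbol meaning invertibility of the $2\times2$ matrix at each $(t,y)\in\tp\times\overline{\sR}$ together with non-vanishing of the scalar symbol on $\{-1,1\}\times\overline{\sR}$.
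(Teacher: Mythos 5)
First, a point of reference: the paper contains no proof of Theorem \ref{t4} at all --- it is quoted from \cite{RS1990}, see also \cite{RSS2011} --- so your proposal can only be measured against the method of those sources, which is indeed the localization scheme you describe. Your global architecture is the right one: the cosets $T^\pi(f)$ with $f\in C$, $f=\widetilde f$, are central in $TH^\pi(PC)$, the associated maximal ideal space is $\sT_+$, at a point $t\in\tp$ the pairing of $t$ with $\overline t$ (via $\widetilde a(t\pm0)=a(\overline t \mp 0)$) produces a $2\times2$ local algebra whose symbol is the matrix in assertion (i), the remark that conjugation by $\diag(1,-1)$ interchanges the symbols of $T(a)\pm H(b)$ is correct, and the Allan--Douglas principle then yields (ii) and (iii), with (i) as the specialization to the generators. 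Up to this point your sketch is sound and agrees with the cited machinery.

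The genuine gap is exactly where you place the ``real work'': the fixed points $t=\pm1$. You postulate that every irreducible representation of the local algebra there is one-dimensional and offer the two-projections theorem plus the bound \eqref{ineq} as the tools, but neither delivers this. The two-projections theorem applied to the localized Riesz projection and the projection $(I+J)/2$ generically produces \emph{two}-dimensional irreducible representations (one-dimensional ones occur only at exceptional points of the spectral parameter), and in fact the local algebra at $\pm1$ is generated by three elements --- $P$, the flip, and a $PC$ coefficient jumping at the fixed point --- so a two-projections argument does not even apply as stated; moreover \eqref{ineq} is a property of the range of $h_p$, useful for index computations in the examples, and has no bearing on the representation theory of a local algebra, so invoking it here is a non sequitur. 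The efficient route, which the paper itself prepares in the paragraphs preceding Theorem \ref{t4}, is different: split $b=b_0+b_1$ with $b_0\in PC\cap C(\sT\setminus\{-1,1\})$ carrying the jumps of $b$ at $\pm1$ and $b_1$ continuous at $\pm1$; use the non-trivial inclusion $H(b_0)\in T(PC)$ (sketched there via \cite[Theorem~4.4.5(iii)]{RSS2011} and localization) together with the observation that the coset of $H(b_1)$ vanishes locally at $\pm1$ (since $b_1-b_1(\pm1)$ vanishes there, $H(c)=0$ for constant $c$ because $PJ$ annihilates $H^p$, and one invokes \eqref{eq2}-type identities). Then the local algebra of $TH(PC)$ at $t=\pm1$ collapses into the \emph{commutative} local algebra of $T(PC)$, and the scalar symbol of assertion (i) drops out of Theorem \ref{t3} combined with the displayed formula for $\smb H^\pi(b_0)$. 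Without either this reduction or an actual proof of your one-dimensionality claim, the fixed-point step --- and with it all three assertions --- remains unestablished.
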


 \begin{remark}\label{r4}
\begin{enumerate}
\item Theorem \ref{t4} indicates that the algebra $TH^\pi(PC)$ is
not commutative. In particular, this means that Hankel operators
with generating functions from $PC(\sT)$ with discontinuity points
on $\sT\setminus \{-1,1\}$ do not belong to the algebra $T(PC)$.

\item This theorem also shows that the roles of the points $t=\pm1$
and $t\in\tp$ are very different. In connection with this, let us
remaind that $t=\pm1$ are the only fixed points of the operator $J$.

\item There is an index formula for Fredholm operators
$T(a)+H(b)$ considered on the space $l^p(\sZ_+)$ \cite{RS1}. The
approach of \cite{RS1} can be modified in order to get a similar
result for Toeplitz plus Hankel operators acting on $H^p$-spaces. On
the other hand, the index formula presented in Proposition \ref{p2}
below, is relatively simple and sufficiently effective to handle all
problems considered in this work.

 \item If $a\in PC$, then the symbols of the operator $T(a)$ defined
 by Theorems \ref{t3} and \ref{t4} coincide.
\end{enumerate}
 \end{remark}
Let us note that at the first time  Theorem \ref{t4} appeared in
\cite{RS1990}, see also \cite{RSS2011}.

We proceed with  additional preparatory material.

 \begin{theorem}\label{t5}
If $a,b\in PC$ are continuous at the points $t=-1$ and $t=1$, then
the operators $T(a)+H(b)$ and $T(a)-H(b)$ are simultaneously
Fredholm or not. If one of this operators is Fredholm, then
 \begin{equation*}
\ind(T(a)+H(b))=\ind(T(a)-H(b)).
\end{equation*}
Moreover,
 \begin{equation*}
\ind(T(a)+H(b))=\frac{1}{2}\ind T(U),
 \end{equation*}
 where $T(U)$ is the block Toeplitz operator on $H^p\times H^p$ with
 generating matrix-function $U=U(a,b)$ defined by \eqref{eq5}.
 \end{theorem}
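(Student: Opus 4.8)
\noindent The plan is to obtain all three assertions from the symbol calculus of Theorem~\ref{t4} together with the index splitting of Theorem~\ref{t1}. The algebraic fact driving everything is that the symbol of $T(a)-H(b)$ is obtained from that of $T(a)+H(b)$ by replacing $b$ with $-b$. On the open upper half-circle $\tp$ this is a constant similarity: writing
\begin{equation*}
\smb(T(a)+H(b))(t,y)=\left(\begin{array}{cc}\alpha & \beta\\ \gamma & \delta\end{array}\right),\qquad (t,y)\in\tp\times\overline{\sR},
\end{equation*}
one sees at once that $\smb(T(a)-H(b))(t,y)=D\,\smb(T(a)+H(b))(t,y)\,D$ with $D:=\diag(1,-1)$, because $D$ simply reverses the signs of the off-diagonal entries $\beta$ and $\gamma$. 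At the fixed points $t=\pm1$ the symbols are scalar, and this is where the hypothesis is used: continuity of $b$ at $\pm1$ kills the jump term $\frac{b(\pm1+0)-b(\pm1-0)}{2}\,h_p(y)$, while continuity of $a$ collapses the rest to the constant $a(\pm1)$. Hence at $\pm1$ the two scalar symbols coincide and equal $a(\pm1)$, irrespective of $b$ and of the sign in front of $H(b)$.

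\noindent First I would settle the simultaneous Fredholmness. By Theorem~\ref{t4}(i) each operator $T(a)\pm H(b)$ is Fredholm precisely when its matrix symbol is invertible on $\tp\times\overline{\sR}$ and its scalar symbol does not vanish on $\{-1,1\}\times\overline{\sR}$. On $\tp\times\overline{\sR}$ the relation $\smb(T(a)-H(b))=D\,\smb(T(a)+H(b))\,D$ with $D$ invertible shows that one matrix symbol is invertible exactly when the other is. On $\{-1,1\}\times\overline{\sR}$ both scalar symbols equal the constant $a(\pm1)$, so both are invertible exactly when $a(\pm1)\neq0$. Thus the two operators obey equivalent invertibility conditions and are therefore Fredholm or not together.

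\noindent The equality of the indices is the crux. Here I would invoke the index formula of Proposition~\ref{p2}, which expresses the index of a Fredholm operator in $TH(PC)$ through the winding number of the determinant of its symbol over $\tp\times\overline{\sR}$ together with the contributions coming from the scalar symbols at the fixed points $\pm1$. Since $D$ is a constant invertible matrix,
\begin{equation*}
\det\smb(T(a)-H(b))=\det\!\left(D\,\smb(T(a)+H(b))\,D\right)=\det\smb(T(a)+H(b))\quad\text{on }\tp,
\end{equation*}
so the two determinant curves, and hence their winding numbers, coincide; and the fixed-point symbols agree as noted above. Consequently the formula returns the same value for both operators, that is, $\ind(T(a)+H(b))=\ind(T(a)-H(b))$.

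\noindent The factor $\frac12$ is then immediate from Theorem~\ref{t1}: the operators $\diag(T(a)+H(b),T(a)-H(b))$ and $T(U)$ are simultaneously Fredholm with $\ind(T(a)+H(b))+\ind(T(a)-H(b))=\ind T(U)$, and the equality of the two summands yields $\ind(T(a)+H(b))=\frac12\ind T(U)$. The main obstacle is the index step: one must verify that the formula of Proposition~\ref{p2} is insensitive to the constant similarity by $D$ on $\tp$ and that the contributions at $\pm1$ cancel in the difference. As $D$ preserves determinants and the fixed-point symbols are equal --- indeed constant --- this invariance is expected, but it is exactly the point at which the precise form of the index formula must be brought in. It is worth stressing that the continuity of $b$ at $\pm1$, which removes the sign-sensitive Hankel jump governed by $h_p$, is what prevents the two operators from picking up different fixed-point contributions; without it the indices, and even the Fredholm property itself, may differ, as the examples of Section~\ref{s5} show.
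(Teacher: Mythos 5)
Your treatment of the first assertion is sound and is actually more self-contained than the paper's: Theorem \ref{t4}(i) is an if-and-only-if criterion, the matrix symbols of $T(a)\pm H(b)$ on $\tp\times\overline{\sR}$ are conjugate by the constant invertible matrix $D=\diag(1,-1)$, and continuity of $a$ and $b$ at $\pm1$ reduces both scalar symbols at the fixed points to the constant $a(\pm1)$; the paper itself disposes of this part only by citing \cite{RS1} and \cite{DS2012a}. Likewise, your derivation of $\ind(T(a)+H(b))=\frac12\ind T(U)$ from Theorem \ref{t1} once the two indices are known to be equal is precisely the paper's argument (relation \eqref{eq6}).

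The gap is in the middle step, the equality of indices, and it is a genuine one: you prove it by invoking Proposition \ref{p2}, but in the paper Proposition \ref{p2} is itself a consequence of Theorem \ref{t5}. Its proof factors $T(a)+H(b)$ modulo compacts as $(T(g)+H(b_0))(T(a_2)+H(b_2))$ via \eqref{eq13} and then applies Theorem \ref{t5} to the factor $T(a_2)+H(b_2)$, whose generating functions are continuous at $\pm1$; so using Proposition \ref{p2} here is circular. Moreover, Proposition \ref{p2} is not the formula you describe: it expresses the index as $-\wind\smb(T(g)+H(b_0))+\frac12\ind T(U_1)$, i.e., through a $T(PC)$ factor and a block Toeplitz operator, not through a winding number of the determinant of the $TH(PC)$ symbol over $\tp\times\overline{\sR}$ plus fixed-point contributions. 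No index formula of the latter kind is established anywhere in the paper; Remark \ref{r4}(iii) says only that such a formula could be obtained by adapting \cite{RS1}, and the paper deliberately works around it. To close the gap you must obtain the index equality independently of Proposition \ref{p2}: either carry out the adaptation of the index formula of \cite{RS1} to $H^p$ (this is the paper's own route, ``analogously to \cite{RS1}, see also \cite{DS2012a}'') and then verify that the formula is insensitive to the sign change $b\mapsto-b$, or produce a direct argument, for instance a homotopy inside the Fredholm class connecting the two operators along which the index is constant. The observation that conjugation by $D$ preserves determinants is consistent with the claim but, absent an actual index theorem for $TH(PC)$, it proves nothing.
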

The first assertion can be proved analogously to \cite{RS1}, see
also \cite{DS2012a}, and the second one directly follows from the
relation \eqref{eq6}.

\begin{remark}\label{r5}
In the case at hand, the index computation is reduced to the similar
problem for a block Toeplitz operator the generating matrix of which
has piecewise continuous entries. The index formula for such
operators is known \cite[Chapter 5]{BS}.
\end{remark}

As the next step we prove the following proposition.

\begin{proposition}\label{p1}
 If $a,b\in PC$ and the operator $T(a)+H(b)$ is Fredholm, then
 \begin{equation}\label{eq13}
 (T(a)+H(b))-(T(g)+H(b_0))(T(ag^{-1})+H(b_1g^{-1})) \in \cK(H^p),
\end{equation}
where $b_0$ is a function continuous on $\sT\setminus \{-1,1\}$ such
that $b_0(1\pm0)=b(1\pm0)$, $b_0(-1\pm0)=b(-1\pm0)$, $b_1:=b-b_0$,
and $g$ is a function on $\sT$, which is invertible, continuous on
$\sT\setminus \{-1,1\}$ and $a(1\pm0)=g(1\pm0)$,
$a(-1\pm0)=g(-1\pm0)$. Moreover, the operators $T(g)+H(b_0)$ and
$T(ag^{-1})+H(b_1g^{-1})$ are also Fredholm and $T(g)+H(b_0)$
belongs to the algebra $T(PC)$.
 \end{proposition}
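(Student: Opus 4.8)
The plan is to work entirely with the symbol homomorphism $\smb$ from Theorem~\ref{t4}. All three operators $T(a)+H(b)$, $T(g)+H(b_0)$ and $T(ag^{-1})+H(b_1g^{-1})$ lie in $TH(PC)$ (note $g^{-1},\,ag^{-1},\,b_1g^{-1}\in PC$, since $g$ is invertible in $L^\infty$ and continuous off $\{-1,1\}$), hence so does the operator in \eqref{eq13}. Because $\smb$ is multiplicative on $TH(PC)$ and its kernel is exactly the ideal $\cK(H^p)$ — the faithfulness of the symbol modulo compacts being the content of the non-commutative Gelfand theory behind Theorem~\ref{t4} — it suffices to check the pointwise identity
\[
\smb\big(T(g)+H(b_0)\big)\cdot\smb\big(T(ag^{-1})+H(b_1g^{-1})\big)=\smb\big(T(a)+H(b)\big)
\]
on $\sT_+\times\overline{\sR}$. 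I would verify this separately for $t\in\tp$ (matrix symbols) and for $t=\pm1$ (scalar symbols).

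For $t\in\tp$ both $g$ and $b_0$ are continuous at $t$ and at $\overline{t}$, so the off-diagonal entries in the formula of Theorem~\ref{t4}(i) vanish and $\smb(T(g)+H(b_0))(t,y)=\diag\big(g(t),g(\overline{t})\big)$. Writing $\alpha:=ag^{-1}$, $\beta:=b_1g^{-1}=(b-b_0)g^{-1}$ and using $g(t\pm0)=g(t)$, $b_0(t\pm0)=b_0(t)$, one finds that each entry of $\smb(T(\alpha)+H(\beta))(t,y)$ is the corresponding entry of $\smb(T(a)+H(b))(t,y)$ scaled by $g(t)^{-1}$ (first row) or $g(\overline{t})^{-1}$ (second row); that is, $\smb(T(\alpha)+H(\beta))(t,y)=\diag\big(g(t)^{-1},g(\overline{t})^{-1}\big)\,\smb(T(a)+H(b))(t,y)$. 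Multiplying the two factors, the diagonal matrices cancel and the required identity holds on $\tp\times\overline{\sR}$.

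At $t=\pm1$ the symbols are scalars. The matching conditions $g(\pm1\pm0)=a(\pm1\pm0)$ and $b_0(\pm1\pm0)=b(\pm1\pm0)$ give immediately $\smb(T(g)+H(b_0))(\pm1,y)=\smb(T(a)+H(b))(\pm1,y)$. On the other hand $\alpha(\pm1\pm0)=a(\pm1\pm0)g(\pm1\pm0)^{-1}=1$ and $\beta(\pm1\pm0)=(b-b_0)(\pm1\pm0)g(\pm1\pm0)^{-1}=0$, so $\smb(T(\alpha)+H(\beta))(\pm1,y)=\nu_p(y)+(1-\nu_p(y))=1$. Hence the product again equals $\smb(T(a)+H(b))(\pm1,y)$, the symbol identity holds on all of $\sT_+\times\overline{\sR}$, and by faithfulness the operator in \eqref{eq13} is compact.

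For the remaining assertions: the diagonal entries $g(t),g(\overline{t})$ are nonzero for $t\in\tp$ because $g$ is invertible, while at $t=\pm1$ the scalar $\smb(T(g)+H(b_0))$ coincides with the invertible symbol of the Fredholm operator $T(a)+H(b)$ (recall Fredholmness forces $a$ to be invertible in $L^\infty$, so all limits $a(\pm1\pm0)$ are nonzero); thus $\smb(T(g)+H(b_0))$ is invertible and $T(g)+H(b_0)$ is Fredholm by Theorem~\ref{t4}(iii). Then $\smb(T(\alpha)+H(\beta))=\smb(T(g)+H(b_0))^{-1}\smb(T(a)+H(b))$ is a product of invertible symbols, so $T(ag^{-1})+H(b_1g^{-1})$ is Fredholm as well. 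Finally $g\in PC$ and $b_0\in PC\cap C(\sT\setminus\{-1,1\})$, so $H(b_0)\in T(PC)$ by the inclusion recalled just before Theorem~\ref{t4}, whence $T(g)+H(b_0)\in T(PC)$. The one genuinely delicate point is the appeal to faithfulness of $\smb$ modulo compacts, rather than merely the Fredholm criterion of Theorem~\ref{t4}(iii); this is where I would lean on \cite{RS1990,RSS2011}.
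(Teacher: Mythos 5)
Your symbol computations are correct as far as they go: at points $t\in\tp$ the symbol of $T(g)+H(b_0)$ is indeed $\diag\left(g(t),g(\overline{t})\right)$ and the symbol of $T(ag^{-1})+H(b_1g^{-1})$ is $\diag\left(g(t)^{-1},g(\overline{t})^{-1}\right)\smb(T(a)+H(b))$, while at $t=\pm1$ the two factors carry the scalar symbols $\smb(T(a)+H(b))$ and $1$; the Fredholmness assertions and the inclusion $T(g)+H(b_0)\in T(PC)$ are also argued correctly. The gap is exactly the point you flag at the end, and it is not cosmetic. Theorem \ref{t4} provides only a Fredholm criterion: the family of homomorphisms $\smb(\cdot)(t,y)$ is \emph{sufficient} for Fredholmness. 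It does not assert that the common kernel of this family in the Calkin image is trivial, i.e.\ that $\smb A=0$ forces $A\in\cK(H^p)$. What the vanishing of the symbol of $D:=(T(a)+H(b))-(T(g)+H(b_0))(T(ag^{-1})+H(b_1g^{-1}))$ actually gives you, via Theorem \ref{t4}(iii), is that $D-\lambda I$ is Fredholm for every $\lambda\neq0$; hence $D$ is a Riesz operator (essential spectrum contained in $\{0\}$), which is weaker than compactness. Upgrading ``Riesz'' to ``compact'' amounts to semisimplicity of $TH(PC)/\cK(H^p)$. For $p=2$ this is automatic C*-theory: if $x$ lies in the common kernel of a sufficient family of $*$-homomorphisms, then $x^*x$ is quasinilpotent, so $\|x\|^2=r(x^*x)=0$. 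For $p\neq2$ there is no involution and no such automatic argument; the faithfulness of $\smb$ modulo compacts is not stated in Theorem \ref{t4}, and the sources you lean on supply Fredholm criteria (sufficient families), not a description of $\ker\smb$. So the decisive step of your proof does not close as written.

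The paper sidesteps this entirely by a constructive argument: it expands the product $(T(g)+H(b_0))(T(ag^{-1})+H(b_1g^{-1}))$ using the identities \eqref{eq1}--\eqref{eq2} and then invokes a concrete compactness criterion (Corollary 5.33 of \cite{BS}) for the resulting Hankel/Toeplitz products. This applies because $ag^{-1}$ and $b_1g^{-1}$ are continuous at $\pm1$ (with $ag^{-1}(\pm1)=1$ and $b_1g^{-1}(\pm1)=0$) while $g$ and $b_0$ are continuous on $\sT\setminus\{-1,1\}$, so in every error term the discontinuities of the two generating functions are separated, forcing compactness term by term. If you wish to salvage your route, you must either restrict to $p=2$ or prove the semisimplicity/faithfulness statement for $TH^\pi(PC)$ on $H^p$; otherwise the final step should be replaced by this direct computation.
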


\begin{proof} By Theorem \ref{t4}, the function $a$ is invertible in
$PC$. The existence of functions $b_0$ and $g$ with the properties
prescribed is obvious. Notice that $ag^{-1}$ is continuous at the
points $t=\pm1$ and $ag^{-1}(\pm1)=1$. Using the relations
\eqref{eq1}-\eqref{eq2} and Corollary 5.33 from \cite{BS}, one
arrives at  the representation \eqref{eq13}. The Fredholmness of all
the Toeplitz plus Hankel operators appeared is clear, and the
inclusion $T(g)+H(b_0)\in T(PC)$ for such a kind operators has been
mentioned before.
 \end{proof}

Now we are able to establish a transparent index formula for the
operators under consideration.

 \begin{proposition}\label{p2}
If \/ $T(a)+H(b)$ is Fredholm, then
\begin{equation*}
\ind(T(a)+H(b))=-\wind \smb(T(g)+H(b_0))+ \frac{1}{2}\ind T(U_1),
\end{equation*}
where
\begin{equation*}
U_1 =\left(%
\begin{array}{cc}
  a_{2}-b_{2} \widetilde{b}_{2}\widetilde{a}^{-1}_{2} & - b_{2}\widetilde{a}^{-1}_{2}\\
   \widetilde{b}_{2} \widetilde{a}^{-1}_{2} &  \widetilde{a}^{-1}_{2} \\
   \end{array}%
\right)
\end{equation*}
and $a_2=ag^{-1}$, $b_2=b_1 g^{-1}$.
 \end{proposition}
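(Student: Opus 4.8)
The plan is to combine the decomposition supplied by Proposition \ref{p1} with the additivity of the index and the two index formulas already at our disposal. By Proposition \ref{p1}, modulo compact operators we have the factorization
\begin{equation*}
T(a)+H(b)=(T(g)+H(b_0))(T(a_2)+H(b_2))+K,\qquad K\in\cK(H^p),
\end{equation*}
where $a_2=ag^{-1}$ and $b_2=b_1g^{-1}$, and all three Toeplitz plus Hankel operators are Fredholm. Since the index is additive on products of Fredholm operators and invariant under compact perturbations, the first step is simply to write
\begin{equation*}
\ind(T(a)+H(b))=\ind(T(g)+H(b_0))+\ind(T(a_2)+H(b_2)).
\end{equation*}

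Next I would evaluate each of the two summands with the appropriate tool. For the first factor, Proposition \ref{p1} tells us that $T(g)+H(b_0)\in T(PC)$; hence by Theorem \ref{t3}(iii) (and the convention of Remark \ref{r3} that $\smb$ acts on all of $T(PC)$) its index equals $-\wind\smb(T(g)+H(b_0))$. This produces the first term of the claimed formula. For the second factor, the key observation, already noted in the proof of Proposition \ref{p1}, is that $a_2=ag^{-1}$ is continuous at $t=\pm1$ with $a_2(\pm1)=1$; by the prescribed matching of one-sided limits, $b_2=b_1g^{-1}=(b-b_0)g^{-1}$ is likewise continuous at $t=\pm1$. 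Thus $a_2$ and $b_2$ are both continuous at the two fixed points of $J$, which is exactly the hypothesis of Theorem \ref{t5}. Applying Theorem \ref{t5} to the pair $(a_2,b_2)$ gives
\begin{equation*}
\ind(T(a_2)+H(b_2))=\tfrac{1}{2}\ind T(U_1),
\end{equation*}
where $U_1=U(a_2,b_2)$ is precisely the matrix displayed in the statement (substitute $a_2,b_2$ into \eqref{eq5}). Combining the two evaluations yields the asserted identity.

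The main obstacle, and the only point requiring genuine care, is justifying the additivity of the index across the factorization \eqref{eq13}. Index additivity $\ind(AB)=\ind A+\ind B$ holds for Fredholm operators, so one must confirm that all three operators $T(a)+H(b)$, $T(g)+H(b_0)$, and $T(a_2)+H(b_2)$ are genuinely Fredholm on $H^p$ — which Proposition \ref{p1} asserts — and that the compact remainder $K$ does not disturb the index, which follows from invariance of the index under compact perturbations. A secondary subtlety is ensuring that the symbol calculus of Theorem \ref{t3} legitimately applies to $T(g)+H(b_0)$; this is secured by the last sentence of Proposition \ref{p1}, namely the membership $T(g)+H(b_0)\in T(PC)$, so that the winding number $\wind\smb(T(g)+H(b_0))$ is well-defined in the sense of Remark \ref{r3}. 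Once these Fredholmness and membership facts are invoked, the proof reduces to the two-line index bookkeeping sketched above.
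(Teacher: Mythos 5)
Your proof is correct and takes essentially the same route as the paper: the paper's own (very terse) proof likewise observes that $b_2$ vanishes at $t=\pm1$ and then combines the factorization \eqref{eq13} from Proposition \ref{p1} with Theorem \ref{t5}, leaving the index additivity across the compact perturbation and the application of the $T(PC)$ symbol calculus to $T(g)+H(b_0)$ implicit. Your write-up merely spells out those implicit steps.
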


 \begin{proof} Remark that the function $b_2$ vanishes at the
 points $t=-1$ and $t=1$. The result now follows from the
 representation \eqref{eq13} and Theorem \ref{t5}.
  \end{proof}

 \begin{remark}\label{r6}
Proposition \ref{p2} indicates that if both operators $T(a)\pm H(b)$
are Fredholm, the difference between their indices  depends only on
the index difference for the operators $T(g)+H(b_0)$ and
$T(g)-H(b_0)$. However, one can observe that
 \begin{equation*}
\ind(T(g)+H(b_0)) - \ind(T(g)-H(b_0)) \in \{-2,-1,0,1,2\}.
 \end{equation*}
 \end{remark}

\begin{theorem}\label{t6}
Let $(a,b)\in PC(\sT)\times PC(\sT)$  be a matching pair with
matching functions $c$ and $d$. Then the following assertions hold.
 \begin{enumerate}
    \item The operators $T(a)\pm H(b)$ are Fredholm if and only if so are the
    Toeplitz operators $T(d)$ and $T(c)$.

    \item  The operators $T(a)\pm H(b)$ are invertible from the right if
     \begin{equation*}
\ind T(c)\geq 0 \quad \text{and} \quad \ind T(d)\geq 0.
     \end{equation*}

    \item  The operators $T(a)\pm H(b)$ are invertible from the left if
    \begin{equation*}
\ind T(c)\leq 0 \quad \text{and} \quad \ind T(d)\leq
 0.
    \end{equation*}

    \item If both numbers $m= \dim \ker T(U(a,b))$ and $n\!=\!\dim
\coker T(U(a,b))$ are not equal to zero but $\ind (T(a)+ H(b))=\ind
(T(a)- H(b))$, then at least one of the operators $T(a)+H(b)$ or
$T(a)-H(b)$ is not one-sided invertible.
 \end{enumerate}
\end{theorem}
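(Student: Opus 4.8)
The plan is to dispatch the first three assertions as near-immediate consequences of the machinery already assembled, and then to treat (iv) by a short bookkeeping argument on kernel and cokernel dimensions.

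For (i), I would invoke Theorem \ref{tt2}: since $(a,b)$ is a matching pair in $PC\times PC$, the operator $\diag(T(a)+H(b),T(a)-H(b))$ is Fredholm if and only if $\diag(T(d),T(c))$ is Fredholm. Fredholmness of a diagonal operator is equivalent to the Fredholmness of each of its diagonal entries, so $T(a)+H(b)$ and $T(a)-H(b)$ are both Fredholm precisely when both $T(c)$ and $T(d)$ are. Assertions (ii) and (iii) I would read off directly from Corollary \ref{c1}, whose two clauses assert exactly the right- and left-invertibility conclusions under the stated sign conditions on $\ind T(c)$ and $\ind T(d)$; here the very existence of these indices already presupposes the Fredholmness furnished by (i).

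For (iv), I would first record that, since $m$ and $n$ are finite, $T(U(a,b))$ is Fredholm, whence by Theorem \ref{t1} both $T(a)\pm H(b)$ are Fredholm and
\begin{align*}
m &= \dim\ker(T(a)+H(b)) + \dim\ker(T(a)-H(b)),\\
n &= \dim\coker(T(a)+H(b)) + \dim\coker(T(a)-H(b)).
\end{align*}
Writing $k_\pm:=\dim\ker(T(a)\pm H(b))$ and $c_\pm:=\dim\coker(T(a)\pm H(b))$, the data become $k_++k_-=m>0$, $c_++c_-=n>0$, while the index hypothesis reads $k_+-c_+=k_--c_-$. For a Fredholm operator, one-sided invertibility means precisely that its kernel or its cokernel is trivial; hence the assertion to be proved is that one cannot simultaneously have ($k_+=0$ or $c_+=0$) and ($k_-=0$ or $c_-=0$).

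I would argue by contradiction, assuming both operators are one-sided invertible, and run through the four resulting cases. If $k_+=k_-=0$ then $m=0$, and if $c_+=c_-=0$ then $n=0$, both contradicting the hypotheses. In the mixed case $k_+=0$ together with $c_-=0$, the index equality gives $-c_+=k_-$, which forces $c_+=k_-=0$ since these are non-negative integers, and again $m=0$; the remaining mixed case $c_+=0$, $k_-=0$ is symmetric and likewise yields $m=0$. Thus in every case $m=0$ or $n=0$, contradicting $m,n\neq0$, so at least one of $T(a)\pm H(b)$ fails to be one-sided invertible. I anticipate no genuine obstacle beyond keeping the non-negativity of the four dimensions in view; the only substantive input is the transfer of kernel and cokernel data from $T(U(a,b))$ to the pair $T(a)\pm H(b)$ supplied by Theorem \ref{t1}.
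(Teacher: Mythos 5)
Your proposal is correct and coincides with the paper's intended argument: the paper states Theorem \ref{t6} without a separate proof precisely because assertion (i) is immediate from Theorem \ref{tt2}, assertions (ii)--(iii) are exactly Corollary \ref{c1}, and assertion (iv) follows from the kernel/cokernel transfer of Theorem \ref{t1} by the same four-case bookkeeping you carry out. Only one phrase needs repair: finiteness of $m$ and $n$ does not by itself give Fredholmness of $T(U(a,b))$ on a Banach space (an injective operator with dense, non-closed range has $m=n=0$ yet is not Fredholm); rather, the hypothesis $\ind(T(a)+H(b))=\ind(T(a)-H(b))$ presupposes that both operators are Fredholm, and Theorem \ref{t1} then yields Fredholmness of $T(U(a,b))$ together with the identities $m=k_++k_-$ and $n=c_++c_-$ on which your case analysis rests.
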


 \begin{remark}\label{r7}
The index computation is reduced to the verification of the
condition $\ind (T(d)+ H(b_0))=\ind (T(d)- H(b_0))$, what can be
done effectively.
 \end{remark}

Notice than certain deficiency of Theorem \ref{t6} consists in its
inability to handle one-sided invertibility in the case where one of
the operators, say $T(a)+H(b)$, is Fredholm but $T(a)-H(b)$ is not.
This situation can still be studied but to do so we need a result of
I.~Shneiberg \cite{Sh1974}.

Let $A$ be an operator defined on each space $L_p$, $1<p<\infty$.
Then the set $A_{F}:=\{p\in(1,\infty):A \quad \text{is Fredholm}\}$
is open. Moreover, for each connected component of the set $A_{F}$,
the index of $A$ is constant.

 Let us now assume that on a space $H^p$ the operator $T(a)+H(b)$
is Fredholm but $T(a)-H(b)$ is not. For the operator $T(a)+H(b)$,
consider  that  connected component of the set $(T(a)+H(b))_{F}$
which contains $p$. Consider also the operators $T(c)$ and $T(d)$.
If $T(a)-H(b)$ is not Fredholm, then at least one of the operators
$T(c)$ and $T(d)$ is not Fredholm. Replacing $p$ by $s$, where $s>p$
and is close enough to $p$, one obtains that $T(c)$ and $T(d)$ are
already Fredholm on $H^{s}$. Consequently, there is an interval
$(p,p')$, $p<p'$ such that $T(c)$ and $T(d)$ are Fredholm operators
for all $s\in (p,p')$. A result of I.~Shneiberg ensures us that
$\ind T(c)\left |_{H^s \mapsto H^s}\right .$ and $\ind T(d)\left
|_{H^s \mapsto H^s}\right .$ are constant functions in $s$. Applying
Theorem \ref{tt2}, one obtains that this is also  true for the
operators $T(a)+H(b)$ and $T(a)-H(b)$ acting on $H^s$, $s\in
(p,p')$. Moreover, the above argumentation shows that this statement
is true even for $T(a)+H(b)$ acting on $H^s$ for all $s\in(p'',p')$
where $p''<p$ and is close enough to $p$.

Let us now formulate the final result.

 \begin{theorem}\label{t7}
Let  $(a,b)\in PC(\sT)\times PC(\sT)$  be a matching pair with
subordinated pair $(c,d)$, and let the operator $T(a)+H(b)$ be
Fredholm on a space $H^p, 1<p<\infty$. Then
\begin{enumerate}
    \item The operator $T(a)+H(b)$ is invertible from the right if
     \begin{equation*}
 \lim_{s\to p+}\ind T(c) \geq 0 \quad \text{and} \quad \lim_{s\to p+} \ind T(d)\left |_{H^s \mapsto H^s}\geq 0 \right ..
     \end{equation*}
    \item The operator $T(a)+H(b)$ is invertible from the left if
     \begin{equation*}
 \lim_{s\to p+}\ind T(c) \leq 0 \quad \text{and} \quad \lim_{s\to p+} \ind T(d)\left |_{H^s \mapsto H^s}\leq 0\right ..
     \end{equation*}
\end{enumerate}
 \end{theorem}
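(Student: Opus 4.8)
The plan is to produce the one-sided inverse on the target space $H^p$ by first invoking Theorem~\ref{t6} on nearby spaces $H^s$ with $s>p$ and then transporting the information down the scale. As prepared in the discussion preceding the statement, Shneiberg's stability result together with the arc description of the symbols of $T(c)$ and $T(d)$ yields numbers $p''<p<p'$ with the following properties: $T(a)+H(b)$ is Fredholm on $H^s$ for every $s\in(p'',p')$, with an index $\kappa$ that is constant on this interval; and $T(c)$, $T(d)$ are Fredholm on $H^s$ for every $s\in(p,p')$, their indices there being the constants $\lim_{s\to p+}\ind T(c)$ and $\lim_{s\to p+}\ind T(d)$. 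Consequently, for any fixed $s\in(p,p')$ the hypotheses of Theorem~\ref{t6} are met on $H^s$.

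The key step I would isolate is a monotonicity of the cokernel dimension along the scale: for every $s>p$,
\[
\dim\coker (T(a)+H(b))|_{H^p}\le \dim\coker (T(a)+H(b))|_{H^s}.
\]
To prove it I would pass to the adjoint, using $\coker A=\ker A^*$. The adjoint of $T(a)+H(b)$ is given by one and the same integral formula on each dual space, and for $1<p<\infty$ the dual of $H^p$ is identified with the Hardy space $H^q$, $1/p+1/q=1$. Since $q$ is a strictly decreasing function of $p$ and $\sT$ carries a finite measure, $s>p$ gives $q_s<q_p$ and hence the nesting $H^{q_p}\subseteq H^{q_s}$; as the adjoint acts by the same formula on both, any solution of $A^*h=0$ lying in $H^{q_p}$ also lies in the kernel of $A^*$ on $H^{q_s}$, which is exactly the claimed inequality.

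Granting this, both assertions close by index bookkeeping. For (i), the hypotheses say that on $H^s$ the operators $T(c)$ and $T(d)$ have nonnegative index, so Theorem~\ref{t6}(ii) makes $T(a)+H(b)$ right-invertible on $H^s$, i.e. $\dim\coker (T(a)+H(b))|_{H^s}=0$; the monotonicity then forces $\dim\coker (T(a)+H(b))|_{H^p}=0$, and a Fredholm operator with zero cokernel is right-invertible. For (ii), Theorem~\ref{t6}(iii) makes $T(a)+H(b)$ left-invertible on $H^s$, so $\dim\ker (T(a)+H(b))|_{H^s}=0$ and hence $\kappa=\ind (T(a)+H(b))|_{H^s}=-\dim\coker (T(a)+H(b))|_{H^s}\le0$. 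Since $\kappa$ is also the index on $H^p$, the monotonicity gives $\dim\coker (T(a)+H(b))|_{H^p}\le-\kappa$, and therefore
\[
\dim\ker (T(a)+H(b))|_{H^p}=\kappa+\dim\coker (T(a)+H(b))|_{H^p}\le\kappa+(-\kappa)=0 ,
\]
so $T(a)+H(b)$ is left-invertible on $H^p$.

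I expect the monotonicity inequality to be the only real obstacle. Its justification hinges on three points that must be checked with care: that the cokernel is faithfully represented as the kernel of the adjoint on the dual Hardy space, that the adjoint is realized by a scale-independent formula so that its kernels genuinely nest, and that the standard identification of $(H^p)^*$ with $H^q$ does not distort the dimension count. The remaining arguments are purely formal: Theorem~\ref{t6} supplies the one-sided inverses on $H^s$, Shneiberg's theorem supplies the constancy of the index, and the inequality above converts these into the desired statements on $H^p$. Notably, the choice $s\to p+$ in the hypotheses is precisely what makes the cokernel nest in the useful direction, which is why both parts can be handled through the single inequality.
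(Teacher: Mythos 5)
Your argument is correct and follows essentially the same route as the paper's own proof: apply Theorem \ref{t6} on $H^s$ for $s\in(p,p')$ to get one-sided invertibility there, use the Shneiberg-based constancy of $\ind (T(a)+H(b))$ on $(p'',p')$, and transfer the conclusion down to $H^p$ across the embedding $H^s\subseteq H^p$. The only difference lies in how the transfer step is justified: the paper cites a lemma of Gohberg and Fel'dman (a Fredholm operator having the same index on two continuously and densely embedded spaces has the same kernel and cokernel on both), whereas you re-derive the half of that lemma you need -- the cokernel monotonicity obtained from $\coker A=\ker A^*$, the duality $(H^p)^*\cong H^q$, and the nesting of the Hardy scale -- and then close with index arithmetic; since this duality argument is in substance the standard proof of the cited lemma, the two proofs coincide.
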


 \begin{proof} If $s\in (p, p')$, one can apply Theorem \ref{t6}
to obtain that on $H^s$ the operator $T(a)+H(b)$ is invertible from
the left, respectively, from the right  if  the condition (i),
respectively, (ii) is satisfied. Because $\ind (T(a)+H(b))\left
|_{H^s \mapsto H^s}\right .$ is constant on the interval $[p,p')$,
one can employ the following result \cite[Chapter ]{GF1974}. Let
$X_1,X_2$ be Banach spaces such that $X_1$ is continuously and
densely embedded into $X_2$, and let $A$ be a linear continuous
Fredholm operator on both $X_1$ and $X_2$ which has the same index
on each space. Then
 \begin{equation*}
\ker A\left |_{X_1\mapsto X_1}\right .= \ker A\left |_{X_2\mapsto
X_2}\right .
 \end{equation*}
and
 \begin{equation*}
 \coker A\left |_{X_1\mapsto X_1}\right .= \coker A\left
|_{X_2\mapsto X_2}\right .
 \end{equation*}
Therefore, according to Theorem \ref{t6},  the operator $T(a)+H(b)$
is also one-sided invertible on $H^p$, which completes the proof.
 \end{proof}

 \begin{example}
Let $a\in PC$ and the operator $T(a)+H(a):H^p\mapsto H^p$ be
Fredholm. The matching condition is trivial, and the last theorem
shows that the operator $T(a)+H(a)$ is one-sided invertible. Hence,
we recovered  a result of E.~Basor and T.~Ehrhardt for $a\in PC$. In
the following the index of certain operators will be computed. These
computations use Theorem \ref{t3} and the results presented after
that theorem.
 \end{example}

\begin{example}\label{ex1}
Consider the function  $a=\exp(i\xi/4), \xi\in(0,2\pi)$. This
function is continuous on $\sT\setminus \{1\}$ and has a jump at the
point $t=1$, viz. $a(1+0)=1$ and $a(1-0)=i$. The operators
$T(a)+H(at)$ and $T(a)-H(at)$ are well-defined on every space $H^p,
1<p<\infty$. Since $b=at$ the matching condition \eqref{eq8} is
satisfied with the matching functions $c(t)= t^{-1}$ and
$d=a\widetilde{a}^{-1}t$. Note that  $a
\widetilde{a}^{-1}=-i\exp(i\xi/2)$. By Theorem \ref{t3} the operator
$T(a \widetilde{a}^{-1})$ is not Fredholm on $H^2$ because $a
\widetilde{a}^{-1}(1+0)=-i$, $a \widetilde{a}^{-1}(1-0)=i$, and the
interval connecting these two points includes the origin. In view of
Theorem \ref{tt2}, at least one of the operators $T(a)+H(at)$ or
$T(a)-H(at)$ is not Fredholm. Since $H(ta)\in T(CP)$, it follows
from Theorem \ref{t3} that $T(a)-H(at)$ is Fredholm  but
$T(a)+H(at)$ is not. Moreover, $\ind(T(a)-H(at))=0$, and easy
arguments show  that $T(a)-H(at)$ is Fredholm on each space $H^p$:
If $p\neq 2$, then $T(a\widetilde{a}^{-1})$ is Fredholm with index
$-1$ for $2<p<\infty$ and with index $0$ for $1<p<2$. Then,
according to Theorem \ref{tt2}, the operator $T(a)-H(at)$ is
Fredholm for each $p\in (1,\infty)$. The above mentioned theorem of
I.~Shneiberg then entails that $T(a)-H(at)$ has index zero for all
$p\in (1,\infty)$. Thus we have
 \begin{equation}\label{eqI}
 \ind T(d)
= \left \{
  \begin{array}{ll}
 -1 & \text{ if } 1<p<2\\
  -2& \text{ if } 2<p<\infty
   \end{array}
    \right . ,
 \end{equation}
 whereas
  \begin{equation*}
\ind T(c)=\ind T(t^{-1})=1 \text{ for all } p\in (1, \infty).
  \end{equation*}
From Theorem \ref{tt2} one concludes that
 \begin{equation*}
\dim\ker \diag (T(a)+H(at),T(a)-H(at)) =1 \text{ for all } p\in
(1,\infty)\setminus \{2\},
 \end{equation*}
and
\begin{equation*}
\dim\coker \diag (T(a)+H(at),T(a)-H(at)) =\left \{
  \begin{array}{ll}
 -1 & \text{ if } 1<p<2\\
  -2& \text{ if } 2<p<\infty
   \end{array}
    \right . .
 \end{equation*}
Moreover,  the constant function $x(t)=1, t\in \sT$ belongs to the
kernel of the operator $T(a)-H(at)$. Therefore,
 \begin{equation*}
\dim\coker (T(a)-H(at))=1,
 \end{equation*}
 so this operator is not one-sided invertible. This leads to the
conclusion that $T(a)+H(at)$ is invertible if  $1<p<2$, and it is
invertible from the left with $\dim\coker(T(a)+H(at))=1$ if
 $2<p<\infty$.

In conclusion of this example, we would like to mention a quite
remarkable fact. It turns out that the approach presented here
allows one to find the kernels of Toeplitz plus Hankel operators in
various situations. Thus, let us consider the operators
$T(a)+H(at^n)$ and $T(a)-H(at^n)$ where $a$ is the above defined
function and $n\in \sZ_+$. The reader can verify that if $n=2m+1,
m\in \sZ_+$, then
 \begin{align*}
\ker(T(a)+H(at^n)) & = \left \{ t^{m+k}-t^{m-k}, \, k=1,2, \ldots, m\right \}, \\
\ker(T(a)-H(at^n)) & = \left \{ t^{m},t^{m+k}+t^{m-k} , \, k=1,2,
\ldots, m\right \},
\end{align*}
 and if $n=2m, m\in \sZ_+$, then
 \begin{align*}
\ker(T(a)+H(at^n)) & = \left \{ t^{m-k-1}-t^{m+k}, \, k=0,1,2, \ldots, m-1\right \}, \\
\ker(T(a)-H(at^n)) & = \left \{ t^{m-k-1}+t^{m+k} , \, k=0,1,2,
\ldots, m-1\right \}.
\end{align*}
It is also worth noting that for arbitrary function $a\in PC$, the
above presented functions belong to the kernel of the corresponding
operators  $T(a)+H(at^n)$ and $T(a)-H(at^n)$ but they may not
exhaust it.
\end{example}

 \begin{example}\label{ex3}
 Consider the operator $T(a)+H(at^{-1})$ where $a$ is the function
 given in Example \ref{ex1}. It is clear that $(a, at^{-1})$ is a
 matching pair with the matching functions $c(t)=t$ and $d=a \widetilde{a}^{-1} t^{-1}$.
 Using \eqref{eqI}, one obtains
    \begin{equation*}
\ind T(d) = \left \{
  \begin{array}{ll}
 1 & \text{ if } 1<p<2\\
  0& \text{ if } 2<p<\infty
   \end{array}
    \right . .
 \end{equation*}
Besides for $p=2$ the operator $T(a)-H(at^{-1})$ is Fredholm but
$T(a)+H(at^{-1})$ is not. Further, since $\ind
(T(a)-H(at^{-1}))\left|_{H^2\mapsto H^2} \right .=0$, the operator
$T(a)-H(at^{-1})$ is Fredholm on all spaces $H^p$, $1<p<\infty$ with
the index $0$, cf. Example \ref{ex1}. Now Theorem \ref{tt2}(i)
entails that if $2<p<\infty$, then the operator
$T(a)-H(at^{-1}):H^p\mapsto H^p$ is invertible whereas
$T(a)+H(at^{-1})$ is left-invertible with codimension one.

 To study the properties of the operators $T(a)\pm H(at^{-1})$
in more detail for $1<p<2$, let us first note that we are in the
situation described in Theorem \ref{tt2}, case (ii). Further, on
each space $H^p$, $1<p<2$ the operators $T(\widetilde{a}^{-1})$ and
$T(a\widetilde{a}^{-1})$ are Fredholm with the index zero, so they
are invertible.

Note that $P_0 T(\widetilde{a}^{-1})T(a\widetilde{a}^{-1}) P_0:\im
P_0\to \im P_0$ is a non-zero operator. Really, let us show that
\begin{equation}\label{eqNZ}
   P_0 T(\widetilde{a}^{-1})T(a\widetilde{a}^{-1}) P_0:\im
P_0 (1)\neq 0.
\end{equation}
The proof of \eqref{eqNZ} can be given from the Wiener-Hopf
factorization of power functions. Definition and properties of such
a factorization can be found in \cite{BS, GK1992a,GK1992b}. Here we
will only sketch the proof. The details are left to the reader.

\sloppy

 Let us  introduce few more notation. From now on, the
argument $\arg z$ of a complex number $z\neq 0$ is always chosen so
that $\arg z\in [0,2\pi)$. If $\beta\in \sC$, the function
$\vp_\beta \in PC$ is defined by
 $$
\vp_\beta (\exp(i\zeta)):=\exp(\beta (\zeta-\pi)), \quad \zeta\in
[0,2\pi).
 $$
It is easily seen that the function $\vp_\beta$ has at most one
discontinuity - viz. a jump at the point $z_0=1$ so that
$\vp_\beta(1+0)=\exp(-\pi i \beta)$ and $\vp_\beta(1-0)=\exp(\pi i
\beta)$. Therefore, one has
 $$
\vp_\beta(\exp(i\zeta))=\exp(-i\beta \pi)\exp(i\beta \zeta),
 $$
and the representations
$a\widetilde{a}^{-1}(\exp(i\zeta))=-i\exp(i\zeta/2)$ and
$\widetilde{a}^{-1}(\exp(i\zeta))=-i\exp(i\zeta/4)$ imply that
\begin{equation}\label{eqN1}
a\widetilde{a}^{-1}=c_1 \vp_{1/2}, \quad \widetilde{a}^{-1}=c_2
\vp_{1/4},
\end{equation}
where $c_1,c_2\in \sC/\{0\}$.

Consider also the functions $\xi_\beta, \eta_\beta$ defined on
$\sT/\{0\}$ by
 \begin{align*}
\xi_\beta(t) &=\left ( 1-\frac 1t \right )^\beta :=\exp\left \{
\beta
  \log \left | 1-\frac1t \right | + i \beta \arg  \left ( 1-\frac1t \right )
  \right \}, \\
  \eta_\beta(t)&=(1-t)^\beta:=\exp\left \{ \beta
  \log  |1-t| + i \beta \arg ( 1-t )
  \right \}.
\end{align*}
Note that $\xi_\beta$ (resp., $\eta_\beta$) is the limit on the unit
circle $\sT$ of that branch of the function $(1-1/z)^\beta$ (resp.,
$(1-z)^\beta)$ which is analytic for $|z|>1$ (resp., $|z|<1$) and
takes the value $1$ at $z=\infty$ (resp., $z=0$). We also notice
that for all $t\in \sT/\{1\}$ the relations
\begin{equation*}
\xi_\alpha(t)\xi_\beta(t)=\xi_{\alpha+\beta}(t), \quad
\eta_\alpha(t)\eta_\beta(t)=\eta_{\alpha+\beta}(t),
 \end{equation*}
 and
 \begin{equation}\label{eqN2}
 \vp_\beta(t)=\xi_{-\beta}(t)\eta_\beta(t)
\end{equation}
hold. Moreover, taking into account the invertibility of the
operators $T(\vp_{1/4})$ and $T(\vp_{1/2})$ on the spaces $H^p$ for
$1<p<2$, one concludes that the identity \eqref{eqN2} represents a
Wiener-Hopf factorization of the function $\vp_\beta$ with respect
to $H^p$, $1<p<2$. Thus using \eqref{eqN2} and the relation
$P\xi_{1/2}P(1)=1$, one obtains
\begin{align*}
  P_0 T(\vp_{1/4}) T^{-1}(\vp_{1/2})(1)&= P_0 P \xi_{-1/4} P
  \eta_{1/4}P \eta_{-1/2}P\xi_{1/2} P(1)\\
   & = P_0 P \xi_{-1/4} \eta_{-1/4} (1).
\end{align*}
Now it is easily seen that
 $$
P_0 P \xi_{-1/4} \eta_{-1/4} (1)=c_0,
 $$
where $c_0$ is the Fourier coefficient at $t^0$ for the function
$\xi_{-1/4} \eta_{-1/4}\in L^1(\sT)$. In view of the expansions
 \begin{align*}
  \xi_{-1/4} &\!= \!1+\frac 14  z^{-1}+\frac{(1/4)(1+1/4)}{2!} z^{-2}+\frac{(1/4)(1+1/4)(2+1/4)}{3!}
  z^{-3}+ \cdots \, ,\\
 \eta_{-1/4} &= 1+\frac 14 z+\frac{(1/4)(1+1/4)}{2!} z^2+\frac{(1/4)(1+1/4)(2+1/4)}{3!}
  z^3+ \cdots \, ,
\end{align*}
we get
 $$
c_0=1^2+\sum_{k=1}^\infty \left (
\frac{(1/4)(1+1/4)\cdots(k-1+1/4)}{k!} \right )^2 \neq 0.
 $$
Theorem \ref{tt2}(ii) now leads to the conclusion that if $1<p<2$,
then
 $$
\dim\ker\diag (T(a)+H(t^{-1}a),T(a)-H(t^{-1} a))\left |_{H^p\to
H^p}\right .=0
 $$
Recalling that the index of this operator is zero, one immediately
obtains that on the space $H^p$, $1<p<2$, the operators
$T(a)+H(t^{-1}a)$ and $T(a)-H(t^{-1} a)$ are invertible. Moreover,
since for any $p$, $1<p<\infty$ the operator $T(a)-H(t^{-1}
a):H^p\to H^p$ is Fredholm, it is invertible on each space $H^p$,
$1<p<\infty$.
 \end{example}

 \begin{remark}
 The above approach can be used to obtain complete information
about the invertibility of more general operators $T(a)\pm
H(at^{-n})$, $n\in\sZ$. However, due to lack of space we do not
consider this problem here.
 \end{remark}

 \begin{example}\label{ex2}
Let $a$ be the following piecewise continuous function
 \begin{equation*}
a(t)=\left \{%
 \begin{array}{rl}
 1 & \text{ if } t\in\tp \\
-1 & \text{ if } t\in\sT\setminus \sT_+
\end{array}
\right ..
 \end{equation*}
On the space $H^p$, $1<p<\infty$, consider the operator $iI+H(a)$.
The identity $i\cdot i=a \widetilde{a}=-1$ shows that the matching
condition is satisfied with the matching function $c=ia$.  Theorem
\ref{tt2} entails that for every $p\in(1,\infty)$ the operators
$\diag(iI+H(a), iI-H(a))$ and $\diag(-iT(a), iT(a))$ have identical
Fredholm properties on the space $H^p\times H^p$. Note that on the
space $L^2(\sT)$ the operator $T(a)$ is not Fredholm, so one of the
operators $iI+H(a)$ or  $iI-H(a)$ is also not Fredholm on $H^2$, cf.
Theorem \ref{tt2}. However, for  $p=2$ the range of the symbol of
the operator $iI-H(a)$ is located in the upper half-plane $\Pi^+:=\{
z\in\sC: \Im z>0 \}$, and  Theorem \ref{tt2} shows that the operator
$iI-H(a)$ is Fredholm on each space $H^p$. Moreover,
$\ind(iI-H(a))=0$ for any $p\in(1,\infty)$ whereas
$\ind(iI+H(a))=-2$ for $1<p<2$ and $\ind(iI+H(a))=2$ for
$2<p<\infty$. Really, this is a consequence of the fact that $T(a)$
is Fredholm with $\ind T(a)=1$ if $1<p<2$ and $\ind T(a)=-1$ if
$2<p<\infty$. Consequently, in view of Theorem \ref{tt2}, we have
$iI-H(a)$ is invertible for all $p\in (1,\infty)\setminus \{2\}$ but
$iI+H(a)$ is right or left invertible for $1<p<2$ and $2<p<\infty$,
respectively. The invertibility of the operator $iI-H(a)$ in the
space $H^2$ can be easily shown, so the proof is left to the reader.
 \end{example}

 \begin{example}
Consider the function $a\in PC$,
 \begin{equation*}
a(t)=\left \{%
 \begin{array}{rl}
1 & \text{ if } \re t\geq 0\\
-1 & \text{ if } \re t<0
  \end{array}
   \right .,
\end{equation*}
and the operator $T(a)+H(at)$ acting on the space $H^p$,
$1<p<\infty$. It is clear that $a=\widetilde{a}$, so $(a,at)$ is the
matching pair with matching functions $c(t)=t^{-1}$ and $d=t$. By
Theorem \ref{tt2}, one has
 \begin{equation*}
 \ind\diag (T(a)+H(at),T(a)-H(at))=0.
\end{equation*}
Along with Theorem \ref{t5}, this leads to the equation
 \begin{equation*}
 \ind(T(a)+H(at)) =\ind(T(a)-H(at))=0
\end{equation*}
for all spaces $H^p$. Since $1\in \ker (T(a)-H(at))$, one can use
Theorem \ref{tt2} and obtain that the operator $T(a)+H(at)$ is
invertible on all spaces $H^p$, whereas $T(a)-H(at)$ is not
one-sided invertible.
 \end{example}

 \begin{remark}
The previous examples show that if  additional information is
available, the invertibility problem can be studied completely, even
in the case where Theorem \ref{t6} is not working.
 \end{remark}

 \begin{remark}
Analyzing Example \ref{ex1}, one can establish the fact that if
$a\in L^\infty$ and the operator $T(a)+H(ta)$ is Fredholm, then it
is one-sided invertible.
 \end{remark}

 \begin{remark}
After this work was completed, the authors discovered an interesting
paper \cite{BE:2013}, where Toeplitz plus Hankel operators are
studied under the same assumption \eqref{eq8}. Nevertheless, our
approach is remarkably different from that of \cite{BE:2013}, and
the intersection of the theoretical results obtained is minimal.
 \end{remark}

\def\cprime{$'$} \def\cprime{$'$}



\end{document}